\newtheorem{thm}{Theorem}[section] \newtheorem{lem}[thm]{Lemma} \newtheorem{prop}[thm]{Proposition} \newtheorem{cor}[thm]{Corollary} 
\theoremstyle{definition}
\theoremstyle{definition}
\newtheorem{remark}[thm]{Remark}
\DeclareMathOperator{\End}{End}
\begin{document}

\title{{\large{\textbf{REMARKS ON UNITS OF SKEW MONOIDAL CATEGORIES}}}}
\author{Jim Andrianopoulos}
\date{\today}
\maketitle
\abstract{This article shows that the units of a skew monoidal category are unique up to a unique isomorphism, and internalises this fact to skew monoidales. Some benefits of certain extra structure on the unit maps are also discussed before the axioms of a skew monoidal category are shown to be independent. }

\section{Introduction}
 
Generalisations of the notion of monoidal category have been studied almost as long as the notion itself; one involves relaxing the invertibility of the maps expressing the associativity and unit conditions. Once invertibility is dropped then what is required is that the directions of these constraints be specified; one such choice leads to the notion of skew monoidal category. What are the particular choices in directions of these constraints, and just as importantly, what particular equations do they satisfy? Mac Lane in \cite{Rice} shows that a list of five axioms is sufficient for the coherence for monoidal categories. Kelly in \cite{Kelly1964} found that there were redundancies in that list and reduced it down to two. However this reduction relied on the invertibility of the associativity and unit maps. In the context of skew monoidal categories no such invertibility is assumed and so we require all of the five axioms of Mac Lane.

One of the first observations about a monoid is that its unit (if it exists) is unique, as shown by the equality $i = i.j = j$. In a monoidal category these equalities become isomorphisms $I \cong I\otimes J \cong J$ ; where now in this context there is also a uniqueness result. We show an analogous result for the units of a skew monoidal category. In this context we no longer have isomorphisms $I \cong I\otimes J$ or $I\otimes J \cong J$ but only the maps $I \longrightarrow I\otimes J$ and $I\otimes J \longrightarrow J$. Thus it might seem that uniqueness up to isomorphism is lost, but surprisingly, it turns out that the composite $I \longrightarrow I\otimes J \longrightarrow J$ {\em is} invertible, and we do still have a uniqueness result for this isomorphism.  

Generalisations of monoidal categories have previously been considered with different choices for the directions of the non-invertible associativity and unit maps, or with fewer coherence data than those considered here.
In \cite{ACU}, Altenkirch, Chapman and Uustalu, while studying relative monads, show a certain functor category is skew monoidal, they call it lax monoidal. Independently, and motivated by bialgebroids, Szlach\'anyi in \cite{Szl2012}, first names and studies skew monoidal categories as such. In this text what we call a skew monoidal category is usually referred to as a left skew monoidal category, and what could have been referred to as a skew psuedomonoid we call a skew monoidale.   

In Section 2 we establish that the units are isomorphic up to a unique isomorphism; this is the analogue for skew monoidal categories of Proposition 1.7 in \cite{K}. This was shown for monoidal categories by Mac Lane in \cite{Rice} and later by Kock in \cite{K}, where references are also given to these results by Saavedra Rivano in \cite{Sav}.
The proofs here follow the same methods employed in \cite{K}. We then impose some extra structure on the unit maps of a skew monoidal category and remark on some consequences of this extra structure. We then depart from the main theme of this article by showing the independence of the axioms for a skew monoidal category before returning to it to conclude the section with some remarks on the unit conditions of a monoidal functor between skew monoidal categories. In Section 3 we internalise the main result of Section 2 to skew monoidales; that is, out of the cartesian monoidal 2-category \textbf{Cat} and lift it into a monoidal bicategory, although by the coherence results of \cite{GPS} it sufficies to work in a Gray monoid.
 
\section{Skew Monoidal Categories}
\subsection{Skew Semimonoidal Categories}

A {\em skew semimonoidal category} is a triple  $({\cal C},\otimes,\alpha)$ where $\cal C $ is a category equipped with a functor $\otimes : \cal C \times \cal C \to \cal C$ (called {\em tensor product}), and a
natural family of {\em lax constraints} $\alpha$ whose components have the directions
\begin{equation*}
\alpha_{X,Y,Z} : (X\otimes Y)\otimes Z \longrightarrow X\otimes (Y\otimes Z)
\end{equation*} subject to the condition that the following diagram commutes
\begin{equation}\label{pentagon}
\xymatrix{
& (W\otimes X)\otimes (Y\otimes Z) \ar[rd]^-{{\alpha_{W,X,Y\otimes Z}}}  & \\
((W\otimes X)\otimes Y)\otimes Z \ar[ru]^-{{\alpha_{W\otimes X,Y,Z}}} \ar[d]_-{{\alpha_{W,X,Y}} \otimes 1_Z} & & W\otimes (X\otimes (Y\otimes Z))  \\
(W\otimes (X\otimes Y))\otimes Z \ar[rr]_-{{\alpha_{W,X\otimes Y,Z}}} & & W\otimes ((X\otimes Y)\otimes Z) \ar[u]_-{1_W\otimes {\alpha_{X, Y,Z}}} }
\end{equation}

\subsection{The Category of Units}\label{Catunits}
A skew monoidal category is a skew semimonoidal category equipped with a chosen unit, in a sense to be defined below. We shall see that if such a unit exists, it is unique up to isomorphism. Furthermore, this isomorphism is compatible, in the sense that it is a morphism in the {\em category of units}, which we now define.

Given a skew semimonoidal category $({\cal C},\otimes,\alpha)$, we form a category $ \cal U(\cal C)$ as follows. The {\em objects} are  triples $(I,\lambda,\rho)$ where $I$ is an object of $\cal C$ and where $\lambda$ and $\rho$ are natural families of {\em lax constraints} whose components have directions 
\begin{equation*}
\lambda_X \colon I\otimes X \longrightarrow X
\end{equation*}
\begin{equation*}
\rho_X \colon X \longrightarrow X\otimes I
\end{equation*}subject to four conditions asserting that the following diagrams commute:

\begin{equation}\label{leftunit}
\xymatrix{
(I\otimes X)\otimes Y \ar[rd]_{\lambda_{X} \otimes 1_Y}\ar[rr]^{\alpha_{I,X,Y}}   && I\otimes (X\otimes Y) \ar[ld]^{\lambda_{X\otimes Y}} \\
& X\otimes Y  &
}
\end{equation}

\begin{equation}\label{midunit}
\xymatrix{
(X\otimes I)\otimes Y \ar[rr]^-{\alpha_{X,I,Y}} && X\otimes (I\otimes Y) \ar[d]^-{1_X\otimes \lambda_Y} \\
X\otimes Y \ar[u]^-{\rho_X \otimes 1_Y} \ar[rr]_-{1_{X\otimes Y}} && X\otimes Y}
\end{equation}

\begin{equation}\label{rightunit}
\xymatrix{
(X\otimes Y)\otimes I \ar[rr]^{\alpha_{X,Y,I}}   && X\otimes (Y\otimes I)  \\
& X\otimes Y \ar[lu]^{\rho_{X \otimes Y}}  \ar[ru]_{1_X\otimes \rho_Y}&}
\end{equation}

\begin{equation}\label{unitunit}
\xymatrix{
I \ar[rd]_{\rho_I}\ar[rr]^{1_I}   && I  \\
& I\otimes I \ar[ru]_{\lambda_I} & .
}
\end{equation}
An {\em arrow} of  $\cal U(\cal C)$ from $(I,\lambda,\rho)$ to $(J,\lambda',\rho')$ is given by an arrow $\varphi \colon I \longrightarrow  J$  in $\cal C$
 such that the following two triangles commute       
\begin{equation}\label{Comp}
\begin{aligned}
\vbox{\xymatrix{
I\otimes X \ar[rd]_{\lambda_{X}}^(0.5){\phantom{aa}}="1"  \ar[rr]^{\varphi\otimes 1_{X}}   && J\otimes X  \ar[ld]^{\lambda'_{X}}_(0.5){\phantom{aa}}="2" \
\\
& X }}
\qquad  \qquad
\vbox{\xymatrix{
X\otimes I \ar[rr]^{1_{X}\otimes \varphi }  && X\otimes J   
\\
& X
\ar[lu]^{\rho_{X}}^(0.5){\phantom{aa}}="1" \
\ar[ru]_{\rho'_{X}}^(0.5){\phantom{aa}}="2" \ }}
\end{aligned}
\end{equation}
The composition of arrows in  $\cal U(\cal C)$ is then given by the composition in $\cal C$.

Given two objects $(I,\lambda,\rho)$ and $(J,\lambda',\rho')$  of  $\cal U(\cal C)$ we {\em define}  $\varphi_{I,J} \colon I \longrightarrow  J$ 
to be the following composite
\begin{equation}\label{phi}
\xymatrix{I\ar[r]^-{\rho'_{I}}& I\otimes J \ar[r]^-{\lambda_{J}} & J }
\end{equation} 
so with this notation $\varphi_{J,I} \colon J \longrightarrow  I$ is the following composite
\begin{align*} 
\xymatrix{J\ar[r]^-{\rho_{J}}& J\otimes I \ar[r]^-{\lambda'_{I}} & I }.
\end{align*}
When no confusion arises we will call these maps just $\varphi$.

\begin{lem}\label{compat}
The map $\varphi_{I,J}$ defined by \eqref{phi} is an arrow in $\cal U (\cal C)$ from $(I,\lambda,\rho)$ to $(J,\lambda',\rho')$.
\end{lem}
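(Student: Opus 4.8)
The plan is to verify directly that the map $\varphi := \varphi_{I,J} = \lambda_{J}\circ\rho'_{I}$ defined in \eqref{phi} makes both triangles of \eqref{Comp} commute, treating the two triangles as mutually dual. Concretely, for the left triangle I must establish $\lambda'_{X}\circ(\varphi\otimes 1_{X}) = \lambda_{X}$, and for the right triangle $(1_{X}\otimes\varphi)\circ\rho_{X} = \rho'_{X}$, for every object $X$. In each case the strategy is the same: expand $\varphi$ using functoriality of $\otimes$, invoke one of the outer unit triangles \eqref{leftunit} or \eqref{rightunit} to trade a whiskered unit map for an $\alpha$-composite, slide the remaining unit map across a naturality square, and finally collapse the tail using the middle unit axiom \eqref{midunit}.

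For the left triangle, first I would write $\varphi\otimes 1_{X} = (\lambda_{J}\otimes 1_{X})\circ(\rho'_{I}\otimes 1_{X})$. Applying \eqref{leftunit} for $(I,\lambda,\rho)$ with $X,Y$ replaced by $J,X$ rewrites $\lambda_{J}\otimes 1_{X}$ as $\lambda_{J\otimes X}\circ\alpha_{I,J,X}$, and then naturality of $\lambda$ at the arrow $\lambda'_{X}\colon J\otimes X\to X$ turns $\lambda'_{X}\circ\lambda_{J\otimes X}$ into $\lambda_{X}\circ(1_{I}\otimes\lambda'_{X})$. What remains is $\lambda_{X}\circ(1_{I}\otimes\lambda'_{X})\circ\alpha_{I,J,X}\circ(\rho'_{I}\otimes 1_{X})$, whose tail is exactly the left-hand side of \eqref{midunit} for the target unit $(J,\lambda',\rho')$ with $X,Y$ replaced by $I,X$; that axiom identifies it with $1_{I\otimes X}$, so the whole composite reduces to $\lambda_{X}$.

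For the right triangle I would run the dual computation. Starting from $(1_{X}\otimes\varphi)\circ\rho_{X} = (1_{X}\otimes\lambda_{J})\circ(1_{X}\otimes\rho'_{I})\circ\rho_{X}$, the axiom \eqref{rightunit} for $(J,\lambda',\rho')$ converts $1_{X}\otimes\rho'_{I}$ into $\alpha_{X,I,J}\circ\rho'_{X\otimes I}$, and naturality of $\rho'$ at $\rho_{X}\colon X\to X\otimes I$ rewrites $\rho'_{X\otimes I}\circ\rho_{X}$ as $(\rho_{X}\otimes 1_{J})\circ\rho'_{X}$. The surviving prefix $(1_{X}\otimes\lambda_{J})\circ\alpha_{X,I,J}\circ(\rho_{X}\otimes 1_{J})$ is precisely the left-hand side of \eqref{midunit} for the source unit $(I,\lambda,\rho)$ with $Y$ replaced by $J$, hence equal to $1_{X\otimes J}$, and the composite collapses to $\rho'_{X}$.

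The individual rewrites are routine once assembled, so the main obstacle is organisational rather than conceptual: one must keep careful track of which unit datum --- the source $(I,\lambda,\rho)$ or the target $(J,\lambda',\rho')$ --- contributes each naturality square and each instance of an axiom. It is worth flagging the resulting asymmetry, since it is easy to misapply: the left triangle consumes the outer axiom \eqref{leftunit} and the naturality of the source, together with the middle axiom \eqref{midunit} of the target, whereas the right triangle consumes the outer axiom \eqref{rightunit} and the naturality of the target, together with the middle axiom \eqref{midunit} of the source. Note also that neither \eqref{unitunit} nor the pentagon \eqref{pentagon} is required at this stage.
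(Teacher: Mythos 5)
Your proposal is correct and follows essentially the same route as the paper's proof: the same three-step decomposition via \eqref{leftunit} for $(I,\lambda,\rho)$, naturality of $\lambda$ at $\lambda'_{X}$, and \eqref{midunit} for $(J,\lambda',\rho')$, merely written as an equational chain rather than as the paper's commuting diagram. The only difference is that you spell out the dual computation for the right-hand triangle of \eqref{Comp} (via \eqref{rightunit}, naturality of $\rho'$, and \eqref{midunit} for $(I,\lambda,\rho)$), which the paper dismisses as analogous, and your closing bookkeeping of which axioms are consumed matches the paper's own later remark.
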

\begin{proof}
We show that the first diagram of \eqref{Comp} commutes by considering the following diagram 
\begin{equation*}
\begin{aligned}
\xymatrix{
I\otimes X \ar[rr]^{\rho'_{I}\otimes 1_ {X}} \ar[drdr]_{1_{X\otimes Y}} && (I\otimes J)\otimes X    
  \ar[rr]^{\lambda_J\otimes 1_{X}} 
\ar[d]_{\alpha_{I,J,X}}  && J\otimes X \ar[d]^ {\lambda'_{X}} 
\\ && I\otimes (J\otimes X) \ar[rur]_{\lambda_{J\otimes X}}
\ar[d]^{1_{I}\otimes\lambda'_{X}} && X
\\ && I\otimes X \ar[rur]_{\lambda_{X}} }
\end{aligned}
\end{equation*}
in which the left hand triangle commutes by equation \eqref{midunit} for $(J,\lambda',\rho')$, the right-hand triangle commutes by equation (\ref{leftunit}) for $(I,\lambda,\rho)$, and the rectangle commutes by the naturality of $\lambda$.
The right hand side of \eqref{Comp} is analogous. \qedhere
\end{proof}
\begin{prop}\label{unique}
There is exactly one morphism from $(I,\lambda,\rho)$ to $(J,\lambda',\rho')$ in $\cal U(\cal C)$.
\end{prop}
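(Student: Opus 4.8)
The plan is to separate the claim into existence and uniqueness. Existence is already settled: Lemma~\ref{compat} exhibits the composite $\varphi_{I,J}$ of \eqref{phi} as an arrow from $(I,\lambda,\rho)$ to $(J,\lambda',\rho')$ in $\cal U(\cal C)$, so at least one morphism exists. It therefore remains only to show that any arrow $\psi\colon I\to J$ of $\cal U(\cal C)$ between these two objects must in fact equal $\varphi_{I,J}$; since $\varphi_{I,J}$ is a fixed canonical map, this pins the morphism down uniquely.

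To prove the coincidence I would start from the defining formula $\varphi_{I,J}=\lambda_J\circ\rho'_I$ and rewrite each factor using the two data that constrain $\psi$, namely the triangles \eqref{Comp} and the unit-unit axiom \eqref{unitunit}. Instantiating the right-hand triangle of \eqref{Comp} at $X=I$ gives $\rho'_I=(1_I\otimes\psi)\circ\rho_I$, so $\varphi_{I,J}=\lambda_J\circ(1_I\otimes\psi)\circ\rho_I$. The key move is then to transport $\psi$ out of the tensor: by naturality of $\lambda$ with respect to the morphism $\psi\colon I\to J$ (read in the subscript variable of $\lambda_{(-)}$) one has $\lambda_J\circ(1_I\otimes\psi)=\psi\circ\lambda_I$, and hence $\varphi_{I,J}=\psi\circ\lambda_I\circ\rho_I$. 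Finally I would invoke \eqref{unitunit} for the object $(I,\lambda,\rho)$, which asserts precisely that $\lambda_I\circ\rho_I=1_I$; substituting this collapses the expression to $\varphi_{I,J}=\psi$. As $\psi$ was arbitrary, uniqueness follows, and together with existence this proves the proposition.

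The argument is short, so there is no deep obstacle; the only real risk is bookkeeping. One must read \eqref{unitunit} in the correct direction as $\lambda_I\circ\rho_I=1_I$ rather than its reverse, and one must apply the naturality square of $\lambda$ to the map $\psi$ itself rather than to some tensor of it. A useful consistency check is that a mirror-image derivation should succeed: beginning again from $\varphi_{I,J}=\lambda_J\circ\rho'_I$, using the left-hand triangle of \eqref{Comp} at $X=J$, the naturality of $\rho'$ with respect to $\psi$, and the axiom \eqref{unitunit} now read for $(J,\lambda',\rho')$, one again obtains $\varphi_{I,J}=\psi$. That both routes land on the same conclusion is reassuring and reflects the symmetry of the two triangle conditions defining an arrow of $\cal U(\cal C)$.
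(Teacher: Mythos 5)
Your proposal is correct and is essentially the paper's own argument: the same three-step rewrite of $\varphi_{I,J}=\lambda_J\circ\rho'_I$ using one triangle of \eqref{Comp}, one naturality square, and \eqref{unitunit}, except that your main route uses the right-hand triangle at $X=I$, naturality of $\lambda$ at $\psi$, and \eqref{unitunit} for $(I,\lambda,\rho)$, while the paper takes precisely the mirror route you describe as your ``consistency check'' (left-hand triangle at $X=J$, naturality of $\rho'$, and \eqref{unitunit} for $(J,\lambda',\rho')$). The two derivations differ only in the dual choice of which unit's data to invoke, so no substantive comparison is needed.
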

\begin{proof}
Suppose we have another  morphism $\tau$ from $I$ to $J$ in $\cal U(\cal C)$, and consider the following diagram
\begin{equation*}
\begin{aligned}
\xymatrix{
I \ar[r]^{\rho'_{I}} \ar[d]_{\tau}
& I\otimes J \ar[d]_{\tau\otimes 1_{J}} 
\ar[drr]^{\lambda_{J}}
\\ J\ar[r]_{\rho'_{J}} \ar@/_2pc/[rrr]_{1_J} & J\otimes J 
\ar[rr]_{\lambda'_{J}} 
&& J   }
\end{aligned} 
\end{equation*}
The square commutes by the naturality of $\rho'$, the triangle commutes by the assumption that $\tau$ satisfies the left hand side of equation \eqref{Comp}, and the semi-circle commutes by \eqref{unitunit} for $(J,\lambda',\rho')$.
This shows that $\tau = \varphi$. \qedhere
\end{proof}

\begin{cor}\label{iso}
Any two objects $(I,\lambda,\rho)$ and $(J,\lambda',\rho')$ in $\cal U(\cal C)$ are isomorphic.
\end{cor}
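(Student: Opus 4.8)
The plan is to deduce the corollary directly from the existence and uniqueness results already established, by exhibiting a pair of mutually inverse arrows. Lemma~\ref{compat} supplies an arrow $\varphi_{I,J}\colon(I,\lambda,\rho)\to(J,\lambda',\rho')$ and, by the same construction applied in the opposite direction, an arrow $\varphi_{J,I}\colon(J,\lambda',\rho')\to(I,\lambda,\rho)$ in $\mathcal{U}(\mathcal{C})$. It then remains only to show that these compose to identities.

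The key observation is that the identity arrow $1_I$ of $\mathcal{C}$ is itself an arrow of $\mathcal{U}(\mathcal{C})$ from $(I,\lambda,\rho)$ to itself: taking $\varphi=1_I$ in \eqref{Comp}, both triangles reduce to $\lambda_X=\lambda_X$ and $\rho_X=\rho_X$, since $1_I\otimes 1_X=1_{I\otimes X}$ and $1_X\otimes 1_I=1_{X\otimes I}$. Hence $\mathcal{U}(\mathcal{C})$ genuinely possesses identity arrows at each object, and these are the identities inherited from $\mathcal{C}$.

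Now I would consider the composite $\varphi_{J,I}\circ\varphi_{I,J}$, which is an arrow in $\mathcal{U}(\mathcal{C})$ from $(I,\lambda,\rho)$ to itself. By Proposition~\ref{unique} there is exactly one such arrow, and since $1_I$ is one of them, we conclude $\varphi_{J,I}\circ\varphi_{I,J}=1_I$. The identical argument applied to $(J,\lambda',\rho')$ yields $\varphi_{I,J}\circ\varphi_{J,I}=1_J$. Therefore $\varphi_{I,J}$ is an isomorphism in $\mathcal{U}(\mathcal{C})$ with inverse $\varphi_{J,I}$, and the two objects are isomorphic.

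I expect essentially no obstacle here: the corollary is a purely formal consequence of the fact that $\mathcal{U}(\mathcal{C})$ is a category whose hom-sets are singletons. The only point that requires (trivial) verification is that the identities of $\mathcal{C}$ lie in $\mathcal{U}(\mathcal{C})$, as this is precisely what lets Proposition~\ref{unique} force each round-trip composite to be an identity rather than merely some unspecified endomorphism.
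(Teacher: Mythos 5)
Your proof is correct and follows essentially the same route as the paper: both arguments apply Lemma~\ref{compat} in each direction and then use the uniqueness of Proposition~\ref{unique} to identify the composites $\varphi_{J,I}\circ\varphi_{I,J}$ and $\varphi_{I,J}\circ\varphi_{J,I}$ with $1_I$ and $1_J$ respectively. Your explicit check that $1_I$ satisfies \eqref{Comp} is a detail the paper leaves implicit, but it is the same proof.
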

\begin{proof}
Both $\varphi_{J,I} \circ \varphi_{I,J}$ and $1_{I}$  are arrows from $(I,\lambda,\rho)$ to $(I,\lambda,\rho)$ in $\cal U(\cal C)$ so by uniqueness they are equal. That $\varphi_{I,J} \circ \varphi_{J,I} = 1_{J}$ is analogous. \qedhere
\end{proof}

Thus a skew semimonoidal category is a {\em skew monoidal category} if $\cal U(\cal C)$ is non-empty.

Proposition~\ref{unique} and Corollary~\ref{iso} then imply that the units for a skew monoidal category are unique up to a unique isomorphism (if they exist).
That is, the category  $ \cal U(\cal C)$ is equivalent to a terminal category.

Next, we shall see that either $\lambda$ or $\rho$ determines the other.
\begin{cor}\label{fixed}
If $(I,\lambda,\rho')$ and $(I,\lambda,\rho)$ are in $\cal U(\cal C)$ then $\rho'$ = $\rho$.
\end{cor}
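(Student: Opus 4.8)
The plan is to read off the conclusion from the canonical comparison map \eqref{phi} together with Lemma~\ref{compat}. Writing $A=(I,\lambda,\rho)$ and $B=(I,\lambda,\rho')$, I would first form the canonical map $\varphi_{A,B}\colon I\to I$ given by \eqref{phi}. Since $A$ and $B$ share the same underlying object and the same left constraint $\lambda$, only $\rho'$ and $\lambda$ enter, and the composite is precisely $I\xrightarrow{\rho'_I}I\otimes I\xrightarrow{\lambda_I}I$. The key observation is that this composite is forced to be the identity: applying the unit--unit axiom \eqref{unitunit} to the object $B$ gives exactly $\lambda_I\circ\rho'_I=1_I$, so $\varphi_{A,B}=1_I$.

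Next I would invoke Lemma~\ref{compat}, which guarantees that $\varphi_{A,B}$ is a genuine arrow of $\mathcal{U}(\mathcal{C})$ from $A$ to $B$. In particular it satisfies the right-hand triangle of \eqref{Comp}, that is $(1_X\otimes\varphi_{A,B})\circ\rho_X=\rho'_X$ for every object $X$. Substituting $\varphi_{A,B}=1_I$ collapses the left-hand side to $1_{X\otimes I}\circ\rho_X=\rho_X$, yielding $\rho_X=\rho'_X$ for all $X$, i.e. $\rho=\rho'$.

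I expect no genuine obstacle here; the only subtlety is resisting the tempting but circular shortcut of trying to show directly that $1_I$ is an arrow $A\to B$. That cannot be argued a priori, because the left triangle of \eqref{Comp} is automatic for $1_I$ (both objects carry the same $\lambda$) while the right triangle for $1_I$ is precisely the desired conclusion $\rho=\rho'$. The whole point is therefore to produce the identity not by assumption but as the value of the canonical map $\varphi_{A,B}$, whose status as an arrow is supplied independently by Lemma~\ref{compat}; the unit--unit axiom \eqref{unitunit} then does the real work of identifying that canonical map with $1_I$.
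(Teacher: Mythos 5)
Your proof is correct and follows essentially the same route as the paper: both form the canonical comparison map \eqref{phi} between the two units, identify it with $1_I$ via \eqref{unitunit} (possible because the shared $\lambda$ makes the composite exactly the unit--unit composite), and then read off $\rho=\rho'$ from the right-hand triangle of \eqref{Comp}. The only cosmetic difference is the direction of the comparison map (the paper takes $(I,\lambda,\rho')\to(I,\lambda,\rho)$, you take the opposite), and your closing remark correctly diagnoses why one cannot simply posit $1_I$ as the arrow.
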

\begin{proof}
Consider the unique morphism $\varphi_{J,I} \colon (I,\lambda,\rho') \longrightarrow (I,\lambda,\rho)$ where $J = (I,\lambda,\rho')$. By \eqref{unitunit}, this must be $1_{I}$; then by \eqref{Comp} we deduce that $\rho = \rho'$. \qedhere
\end{proof}
\begin{cor}
If $(I,\lambda,\rho)$ and $(I,\lambda',\rho)$ are in $\cal U(\cal C)$ then $\lambda$ = $\lambda'$.
\end{cor}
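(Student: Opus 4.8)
The plan is to mirror the proof of Corollary~\ref{fixed}, interchanging the roles of $\lambda$ and $\rho$ throughout. First I would set $J=(I,\lambda',\rho)$ and consider the unique morphism $\varphi_{I,J}\colon (I,\lambda,\rho)\to(I,\lambda',\rho)$ guaranteed by Lemma~\ref{compat} and Proposition~\ref{unique}. Unwinding the definition \eqref{phi}, and recalling that the common $\rho$ plays the role of the $\rho$-datum of the target while $\lambda$ is the $\lambda$-datum of the source, this morphism is the composite $I \xrightarrow{\rho_I} I\otimes I \xrightarrow{\lambda_I} I$, that is, $\lambda_I\circ\rho_I$.

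Next I would observe that \eqref{unitunit} applied to the object $(I,\lambda,\rho)$ gives $\lambda_I\circ\rho_I = 1_I$, so $\varphi_{I,J}=1_I$. Finally I would feed this into the left-hand (i.e.\ $\lambda$-) triangle of \eqref{Comp} for this morphism: it asserts $\lambda'_X\circ(\varphi_{I,J}\otimes 1_X)=\lambda_X$, and substituting $\varphi_{I,J}=1_I$ collapses $\varphi_{I,J}\otimes 1_X$ to the identity, yielding $\lambda'_X=\lambda_X$ for every $X$, hence $\lambda=\lambda'$.

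Since each ingredient---Lemma~\ref{compat}, Proposition~\ref{unique}, \eqref{unitunit}, and \eqref{Comp}---is already available, the argument is essentially bookkeeping. The one place where care is needed, and which I expect to be the only genuine source of error, is matching up \emph{whose} $\lambda$ and \emph{whose} $\rho$ appear in the definition \eqref{phi} and in the two triangles of \eqref{Comp}: because both objects share the underlying object $I$, the subscripts alone do not distinguish source from target, so I would track each datum by its role (source versus target) rather than by its symbol. This is exactly the dual bookkeeping to that of Corollary~\ref{fixed}, where instead the right-hand ($\rho$-) triangle of \eqref{Comp} was invoked.
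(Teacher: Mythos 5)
Your proof is correct and takes essentially the same approach as the paper: the paper's proof just says to repeat the argument of Corollary~\ref{fixed} dually using $\varphi_{I,J}$, which is exactly what you spell out --- the composite $\lambda_I\circ\rho_I$ collapsing to $1_I$ by \eqref{unitunit} for the source object, followed by the left-hand ($\lambda$-) triangle of \eqref{Comp}. Your careful tracking of which datum belongs to the source and which to the target is precisely the bookkeeping the paper leaves implicit.
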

\begin{proof}
Dually, by reversing the tensor and the direction of arrows, we can instead repeat the above argument instead using $\varphi_{I,J}$. \qedhere
\end{proof}

\begin{remark}
Equation \eqref{pentagon} or the pentagon equation was not used in Proposition~\ref{unique} or its Corollaries. This leads to the possibility of similar results about the units of skew versions of categories not satisfing \eqref{pentagon} such as in \cite{J}.
\end{remark}
\begin{remark}
The proof of Lemma~\ref{compat} uses equations \eqref{leftunit},  \eqref{midunit} and \eqref{rightunit} but not \eqref{pentagon} or \eqref{unitunit}, while the proof of Proposition~\ref{unique} uses the equations \eqref{unitunit} and \eqref{Comp}. Now suppose that $\lambda$ and $\rho$ satisfy only \eqref{leftunit}, \eqref{midunit} and \eqref{rightunit}. Then the composite $\xymatrix{I \ar[r]^-{\rho_{I}} & I\otimes I \ar[r]^-{\lambda_{I}}
& I}$ satisfies \eqref{Comp} and so \eqref{unitunit} becomes a special case of the uniqueness result in Proposition~\ref{unique}.  
\end{remark} 

We denote a skew monoidal category by the 6-tuple $({\cal C},\otimes,I,\alpha,\lambda,\rho)$.
For the following proposition, we use the fact that, as $\otimes$ is a bifunctor, the interchange law holds, in particular, 
\begin{align*}
(f\otimes 1) \circ (1\otimes g) = (1\otimes g) \circ (f\otimes 1)
\end{align*}. 

\begin{prop}\label{closedunit}
Let $({\cal C},\otimes,I,\alpha,\lambda,\rho)$ be a skew monoidal category . If there exists an object $J$ and an isomorphism $\varphi \colon J \longrightarrow  I$ in $\cal C$ then $(J, \lambda', \rho')$ is also a unit of $\cal C$, where $\lambda'_{X} \colon J\otimes X \longrightarrow  X$ and $\rho'_{X} \colon X \longrightarrow  X\otimes J$ are given by the following composites:
\begin{equation*}
\vbox{\xymatrix{J\otimes X \ar[rr]^{\varphi \otimes 1_{X}}&& I\otimes X \ar[rr]^-{\lambda_{X}} && X }}
\qquad  \qquad
\vbox{\xymatrix{X \ar[rr]^-{\rho_{X}}&& X\otimes I \ar[rr]^{1_{X}\otimes \varphi^{-1}} && X\otimes J }}
\end{equation*}
\end{prop}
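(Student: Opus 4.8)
The plan is to show that the triple $(J,\lambda',\rho')$ is an object of $\cal U(\cal C)$, since being a unit means precisely satisfying the four conditions \eqref{leftunit}--\eqref{unitunit}. Naturality of $\lambda'$ and $\rho'$ is immediate: each is the composite of a natural family ($\lambda$ or $\rho$) with the fixed map $\varphi$, respectively $\varphi^{-1}$, tensored with an identity, so it remains only to check the four coherence triangles. In every case I would expand $\lambda'$ and $\rho'$ into their defining composites and then reduce to the corresponding condition already known for $(I,\lambda,\rho)$.

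For \eqref{leftunit} I would begin with $\lambda'_{X\otimes Y}\circ\alpha_{J,X,Y}$, expand $\lambda'_{X\otimes Y}=\lambda_{X\otimes Y}\circ(\varphi\otimes 1_{X\otimes Y})$, and use naturality of $\alpha$ in its first variable to rewrite $(\varphi\otimes 1_{X\otimes Y})\circ\alpha_{J,X,Y}=\alpha_{I,X,Y}\circ((\varphi\otimes 1_X)\otimes 1_Y)$; applying \eqref{leftunit} for $(I,\lambda,\rho)$ then collapses $\lambda_{X\otimes Y}\circ\alpha_{I,X,Y}$ to $\lambda_X\otimes 1_Y$, giving $(\lambda_X\otimes 1_Y)\circ((\varphi\otimes 1_X)\otimes 1_Y)=\lambda'_X\otimes 1_Y$ as required. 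The verification of \eqref{rightunit} is entirely dual: expand $\rho'_{X\otimes Y}=(1_{X\otimes Y}\otimes\varphi^{-1})\circ\rho_{X\otimes Y}$, push $1_{X\otimes Y}\otimes\varphi^{-1}$ through $\alpha_{X,Y,J}$ by naturality of $\alpha$ in its third variable, and finish with \eqref{rightunit} for $(I,\lambda,\rho)$.

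For \eqref{midunit} I would expand both $\rho'_X\otimes 1_Y$ and $1_X\otimes\lambda'_Y$, and use naturality of $\alpha$ in its middle variable along $\varphi^{-1}\colon I\to J$ to move $\alpha_{X,J,Y}$ past the factor $\varphi^{-1}$. The decisive simplification is that the two copies of $\varphi$ then meet and cancel, since $(1_X\otimes(\varphi\otimes 1_Y))\circ(1_X\otimes(\varphi^{-1}\otimes 1_Y))=1_X\otimes((\varphi\circ\varphi^{-1})\otimes 1_Y)$ is an identity by functoriality of $\otimes$ together with $\varphi\circ\varphi^{-1}=1_I$. What remains is exactly $(1_X\otimes\lambda_Y)\circ\alpha_{X,I,Y}\circ(\rho_X\otimes 1_Y)$, which equals $1_{X\otimes Y}$ by \eqref{midunit} for $(I,\lambda,\rho)$.

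The one condition in which the invertibility of $\varphi$ is genuinely used, and the step I expect to be the main obstacle, is \eqref{unitunit}. Here $\lambda'_J\circ\rho'_J$ expands to $\lambda_J\circ(\varphi\otimes 1_J)\circ(1_J\otimes\varphi^{-1})\circ\rho_J$. The middle factor has the form $(f\otimes 1)\circ(1\otimes g)$, so the interchange law noted before the proposition rewrites it as $\varphi\otimes\varphi^{-1}$, which I would then re-factor in the opposite order as $(1_I\otimes\varphi^{-1})\circ(\varphi\otimes 1_I)$. Naturality of $\lambda$ along $\varphi^{-1}\colon I\to J$ turns $\lambda_J\circ(1_I\otimes\varphi^{-1})$ into $\varphi^{-1}\circ\lambda_I$, while naturality of $\rho$ along $\varphi\colon J\to I$ turns $(\varphi\otimes 1_I)\circ\rho_J$ into $\rho_I\circ\varphi$, leaving $\varphi^{-1}\circ\lambda_I\circ\rho_I\circ\varphi$. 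Applying \eqref{unitunit} for $(I,\lambda,\rho)$ to cancel $\lambda_I\circ\rho_I$ and then using $\varphi^{-1}\circ\varphi=1_J$ yields $1_J$, which establishes \eqref{unitunit} and completes the proof.
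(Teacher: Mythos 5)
Your proof is correct and takes essentially the same approach as the paper's: each of the four unit axioms is verified by expanding $\lambda'$ and $\rho'$ into their defining composites, transporting $\varphi$ or $\varphi^{-1}$ across $\alpha$, $\lambda$ and $\rho$ by naturality (together with the interchange law and the cancellation $\varphi\circ\varphi^{-1}=1_I$ for \eqref{midunit} and \eqref{unitunit}), and reducing to the corresponding axiom for $(I,\lambda,\rho)$. The only difference is presentational, since you argue equationally where the paper exhibits the same steps as commuting diagrams.
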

\begin{proof} 
We need to show that these composites satisfy the four conditions in the definition. Consider the following diagrams. For the one on the left, the square commutes by the naturality of $\alpha$ and the triangle commutes by \eqref{leftunit}. For the one on the right, the square commutes by the naturality of $\alpha$ and the triangle commutes by \eqref{rightunit}. 
\begin{align*}
\vbox{\xymatrix{
(J\otimes X)\otimes Y  \ar[rr]^{\alpha} \ar[dd]^{(\varphi\otimes 1)\otimes 1} 
&& J\otimes (X\otimes Y) \ar[dd]_{\varphi\otimes 1}
\\
\\
(I\otimes X)\otimes Y \ar[rr]^{\alpha} \ar[dr]_{\lambda\otimes 1} && I\otimes (X\otimes Y) \ar[ld]^{\lambda}
\\ & X\otimes Y }}
\qquad 
\vbox{\xymatrix{
(X\otimes Y)\otimes J  \ar[rr]^{\alpha} 
&& X\otimes (Y\otimes J) 
\\
\\
(X\otimes Y)\otimes I \ar[rr]^{\alpha} \ar[uu]_{1\otimes {\varphi^{-1}}} && X\otimes (Y\otimes I) \ar[uu]^{1\otimes (1\otimes {\varphi^{-1}})}
\\ & X\otimes Y \ar[ur]_{1\otimes \rho} \ar[ul]^{\rho}
 }}
\end{align*}
For the following diagram, the square commutes by the naturality of $\alpha$ and the outside commutes by \eqref{midunit}. The semicircle on the left commutes as $\varphi$ is an isomorphism. Thus the irregular lower region commutes as required.
\begin{align*}
\xymatrix{
(X\otimes I)\otimes Y  \ar[rr]^{\alpha} 
&& X\otimes (I\otimes Y) \ar@/^3.5pc/[dddd]^{1\otimes \lambda}
\\
\\
(X\otimes J)\otimes Y \ar[rr]^{\alpha} \ar[uu]_{(1\otimes \varphi)\otimes 1} && X\otimes (J\otimes Y) \ar[uu]^{1\otimes (\varphi\otimes 1)}
\\
\\  (X\otimes I)\otimes Y \ar[uu]_{(1\otimes \varphi^{-1})\otimes 1} \ar@/^3.5pc/[uuuu]^{(1\otimes 1)\otimes 1} & X\otimes Y \ar[r]_1 \ar[l]_-{\rho \otimes 1} & X\otimes Y
}
\end{align*}
For the final diagram, the top right square commutes by the interchange law, the bottom right square commutes by the naturality of $\lambda$, the top left square commutes by the naturality of $\rho$ and below this, the upper triangle commutes by \eqref{unitunit}.
\begin{align*}
\xymatrix{
J \ar[rr]^{\rho_{J}} \ar[dd]_{\varphi} \ar@/_2.6pc/[dddd]_1 && J\otimes I \ar[rr]^{1\otimes {\varphi^{-1}}} \ar[dd]^{\varphi\otimes 1} && J\otimes J \ar[dd]^{\varphi\otimes 1}
\\
\\ I \ar[rr]^{\rho_{I}} \ar[dd]_{\varphi^{-1}} \ar[ddrr]_1 && I\otimes I \ar[rr]^{1\otimes {\varphi^{-1}}} \ar[dd]^{\lambda_{I}} && I\otimes J \ar[dd]^{\lambda_{J}}
\\
\\ J \ar[rr]_{\varphi} \ar@/_2.4pc/[rrrr]_1 && I \ar[rr]_{\varphi^{-1}} && J }
\end{align*}
\end{proof}

\begin{prop}\label{Iso}
If $({\cal C},\otimes,I,\alpha,\lambda,\rho)$ is a skew monoidal category then 

$(I\otimes I, \lambda_{X}\circ (\lambda_{I}\otimes 1_{X}), (1_{X}\otimes \rho_{I})\circ \rho_{X})$ is a unit of $\cal C$ if and only if $\lambda_{I} \colon I\otimes I \longrightarrow  I$ is invertible.
\end{prop}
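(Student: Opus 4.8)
The plan is to recognise that the proposed structure on $I\otimes I$ is exactly the one manufactured by Proposition~\ref{closedunit}, and that the hypothesis ``$\lambda_I$ invertible'' is precisely what lets that construction run with $\varphi = \lambda_I$. The only input from the axioms that I need beyond the earlier results is the identity extracted from \eqref{unitunit}, namely $\lambda_I \circ \rho_I = 1_I$. I will treat the two implications separately.

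For the direction $(\Leftarrow)$, suppose $\lambda_I \colon I\otimes I \to I$ is invertible. From $\lambda_I \circ \rho_I = 1_I$ and the invertibility of $\lambda_I$ I first deduce $\rho_I = \lambda_I^{-1}$. I then apply Proposition~\ref{closedunit} to the isomorphism $\varphi = \lambda_I \colon I\otimes I \to I$, taking $J = I\otimes I$. That proposition produces a unit structure on $I\otimes I$ with left constraint $\lambda_X \circ (\lambda_I \otimes 1_X)$ and right constraint $(1_X \otimes \lambda_I^{-1}) \circ \rho_X$. Substituting $\lambda_I^{-1} = \rho_I$ rewrites the latter as $(1_X \otimes \rho_I)\circ \rho_X$, so the structure delivered by Proposition~\ref{closedunit} coincides on the nose with the pair $(\lambda',\rho')$ named in the statement. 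Hence $(I\otimes I,\lambda',\rho')$ is a unit.

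For the direction $(\Rightarrow)$, I will show that whenever $(I\otimes I,\lambda',\rho')$ is a unit, the map $\lambda_I$ is already a morphism of $\cal U(\cal C)$ from $(I\otimes I,\lambda',\rho')$ to $(I,\lambda,\rho)$, and then invoke the uniqueness theory. Checking the two triangles of \eqref{Comp} for $\lambda_I$ is immediate: with source $I\otimes I$ and target $I$, the left triangle asserts $\lambda_X \circ (\lambda_I \otimes 1_X) = \lambda'_X$, which holds by the very definition of $\lambda'$; the right triangle asks that $(1_X \otimes \lambda_I)\circ \rho'_X = \rho_X$, and expanding $\rho'_X = (1_X \otimes \rho_I)\circ \rho_X$ and using functoriality of $\otimes$ together with $\lambda_I \circ \rho_I = 1_I$ collapses the left-hand side to $\rho_X$. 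Thus $\lambda_I$ lies in $\cal U(\cal C)$. Since $(I,\lambda,\rho)$ and $(I\otimes I,\lambda',\rho')$ are now both objects of $\cal U(\cal C)$, Corollary~\ref{iso} says the morphism between them is an isomorphism, and Proposition~\ref{unique} says it is unique; as $\lambda_I$ is a morphism between them, it must be that unique isomorphism, so $\lambda_I$ is invertible.

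The arguments are short because the real content has been front-loaded into Proposition~\ref{closedunit} and the uniqueness results; the only genuine calculation is the two-line triangle check in the $(\Rightarrow)$ direction. The main point to get right---and the one place an error could creep in---is the bookkeeping of the \emph{direction} of the morphism in $\cal U(\cal C)$, and hence which of $\lambda,\lambda'$ (respectively $\rho,\rho'$) plays the source and which the target role in \eqref{Comp}. Once the source is fixed as $I\otimes I$ and the target as $I$, the single identity $\lambda_I \circ \rho_I = 1_I$ does all the work in both implications.
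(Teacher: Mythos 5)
Your proof is correct and takes essentially the same route as the paper: the backward direction is exactly the paper's appeal to \eqref{unitunit} and Proposition~\ref{closedunit} with $\varphi = \lambda_I$ (so $\rho_I = \lambda_I^{-1}$), and the forward direction, like the paper's, identifies $\lambda_I$ with the unique morphism in ${\cal U}({\cal C})$ from $(I\otimes I,\lambda',\rho')$ to $(I,\lambda,\rho)$ and concludes invertibility from Corollary~\ref{iso}. The only cosmetic difference is in how that identification is made: the paper computes the canonical composite $\varphi_{I\otimes I,I} = \lambda_I\circ(\lambda_I\otimes 1_I)\circ\rho_{I\otimes I}$ and reduces it to $\lambda_I$ via naturality of $\rho$ and \eqref{unitunit}, whereas you verify directly that $\lambda_I$ satisfies the two triangles of \eqref{Comp} and then invoke the uniqueness of Proposition~\ref{unique} --- an equivalent, equally short check with the direction bookkeeping handled correctly.
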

\begin{proof}
Assuming that $(I\otimes I, \lambda_{X}\circ (\lambda_{I}\otimes 1_{X}), (1_{X}\otimes \rho_{I})\circ \rho_{X})$ is a unit we can use Lemma~\ref{compat} and Proposition~\ref{unique} and just show that $\varphi_{I\otimes I,I} = \lambda_{I}$ by considering the following diagram.
\begin{align*}
\xymatrix{I\otimes I \ar[dd]_{\lambda_I}   
\ar[rr]^{\rho_{I\otimes I}} && (I\otimes I)\otimes I \ar[dd]^{\lambda_{I}\otimes 1_{I}}
\\
\\ I \ar[rr]^{\rho_{I}} \ar[dr]_{1_{I}} && I\otimes I \ar[dl]^{\lambda_I}
\\ & I
}
\end{align*}
where the square commutes by the naturality of $\rho$ and the triangle commutes by \eqref{unitunit}.

Conversely, if $\lambda_{I} \colon I\otimes I \longrightarrow  I$ is invertible then by \eqref{unitunit}  $\rho_{I} \circ \lambda_{I} = 1_{I\otimes I}$, so then $\lambda_{I}^{-1} = \rho_{I}$ and we can apply Proposition~\ref{closedunit}.\qedhere
\end{proof}

A skew monoidal category $({\cal C},\otimes, I,\alpha,\lambda,\rho)$ is {\em weakly normal} if it also satifies the condition that $\rho_{I} \circ \lambda_{I} = 1_{I\otimes I}$; equivalently, if $\lambda_{I} \colon I\otimes I \longrightarrow I$ is invertible.

\begin{prop}\label{Nat}
If $({\cal C},\otimes,I,\alpha,\lambda,\rho)$ is a weakly normal skew monoidal category then the monoid $\End(I)$ of endomorphisms of the unit object $I$ is commutative.
\end{prop}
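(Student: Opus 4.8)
The plan is to recognise this as an Eckmann--Hilton phenomenon. Weak normality supplies $\rho_I\circ\lambda_I=1_{I\otimes I}$, while the unit axiom \eqref{unitunit} supplies $\lambda_I\circ\rho_I=1_I$; together these exhibit $\rho_I$ and $\lambda_I$ as mutually inverse. I would leverage these two equations to equip $\End(I)$ with a second binary operation alongside ordinary composition, check that the two operations share the unit $1_I$ and satisfy an interchange law, and then run the classical Eckmann--Hilton argument to conclude that the two operations coincide and are commutative.

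Concretely, for $f,g\in\End(I)$ I would set $f\ast g=\lambda_I\circ(f\otimes g)\circ\rho_I$, again an endomorphism of $I$. The first key step is to verify that $1_I$ is a two-sided unit for $\ast$. Naturality of $\rho$ applied to $f\colon I\to I$ gives $(f\otimes 1_I)\circ\rho_I=\rho_I\circ f$, so that
$$f\ast 1_I=\lambda_I\circ(f\otimes 1_I)\circ\rho_I=\lambda_I\circ\rho_I\circ f=f,$$
using $\lambda_I\circ\rho_I=1_I$. Dually, naturality of $\lambda$ gives $\lambda_I\circ(1_I\otimes g)=g\circ\lambda_I$, whence $1_I\ast g=g\circ\lambda_I\circ\rho_I=g$.

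The second key step, where weak normality does the real work, is the interchange law between $\ast$ and $\circ$. Invoking bifunctoriality of $\otimes$ (the interchange quoted before Proposition~\ref{closedunit}) and cancelling the inner factor $\rho_I\circ\lambda_I=1_{I\otimes I}$, I would compute
$$(a\ast b)\circ(c\ast d)=\lambda_I\circ(a\otimes b)\circ\rho_I\circ\lambda_I\circ(c\otimes d)\circ\rho_I=\lambda_I\circ\bigl((a\circ c)\otimes(b\circ d)\bigr)\circ\rho_I=(a\circ c)\ast(b\circ d).$$
With a common unit and this interchange established, the standard Eckmann--Hilton manipulation finishes the argument:
$$f\circ g=(f\ast 1_I)\circ(1_I\ast g)=(f\circ 1_I)\ast(1_I\circ g)=f\ast g,$$
so $\ast=\circ$, and moreover
$$f\circ g=(1_I\ast f)\circ(g\ast 1_I)=(1_I\circ g)\ast(f\circ 1_I)=g\ast f=g\circ f,$$
which is the desired commutativity of $\End(I)$.

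I would expect the only delicate bookkeeping to lie in orienting the naturality squares for $\lambda$ and $\rho$ correctly so that the unit laws for $\ast$ come out as stated; everything else is formal. The genuinely essential ingredient is the cancellation $\rho_I\circ\lambda_I=1_{I\otimes I}$ in the interchange step, which is exactly the weak normality hypothesis and without which the two operations would not interlock.
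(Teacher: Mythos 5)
Your proof is correct and is essentially the paper's own argument in Eckmann--Hilton packaging: since weak normality gives $\rho_I=\lambda_I^{-1}$, your operation $f\ast g=\lambda_I\circ(f\otimes g)\circ\rho_I$ is exactly the paper's conjugation $\psi(f\otimes g)=\lambda_I\circ(f\otimes g)\circ\lambda_I^{-1}$, and both proofs rest on the same three ingredients (invertibility of $\lambda_I$ with inverse $\rho_I$, naturality of $\lambda$ and $\rho$ to show $f=\psi(1\otimes f)=\psi(f\otimes 1)$, and the interchange law for $\otimes$). The only cosmetic difference is that the paper swaps $(f\otimes 1)\circ(1\otimes g)=(1\otimes g)\circ(f\otimes 1)$ inside $\psi$ directly rather than first stating unit laws for $\ast$ and invoking the Eckmann--Hilton manipulation.
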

\begin{proof} 
As $\lambda_{I} \colon I\otimes I \longrightarrow  I$ is invertible, it induces an isomorphism $\psi \colon \End(I\otimes I) \longrightarrow \End(I)$ defined by $\psi(\gamma) = \lambda_{I} \circ \gamma \circ \lambda_{I}^{-1}$. For $f \in \End(I)$ we deduce, by the naturality of $\lambda$, that
\begin{eqnarray*}
f & = & f \circ \lambda_{I} \circ \lambda_{I}^{-1} \\
 & = & \lambda_{I} \circ (1_{I}\otimes f) \circ \lambda_{I}^{-1} \\
 & = & \psi(1_{I}\otimes f)
\end{eqnarray*}
Similarly, using the naturality of $\lambda^{-1}$ we get $f = \psi(f\otimes 1_{I})$.

So for $f, g \in \End(I)$ we have, by the interchange law, that
\begin{eqnarray*}
f\circ g & = & \psi(f \otimes 1)\circ \psi(1 \otimes g) \\
 & = & \psi((f\otimes 1)\circ (1\otimes g)) \\
 & = & \psi((1\otimes g)\circ (f\otimes 1)) \\
 & = & \psi(1\otimes g) \circ \psi(f\otimes 1) \\
 & = & g\circ f
\end{eqnarray*} \qedhere
\end{proof}
\begin{remark}
Let $R$-$\textbf{Mod}$ denote the category of left $R$-modules over some ring $R$. Regarding $R$ as a left module over itself using its product, and noticing that $\End(R)$ is the monoid $R$ if we regard $R$ as a monoid under multiplication, we can use Lemma~\ref{Nat} to conclude that if $R$ is a non-commutative ring then $R$ is not the unit object for a weakly normal skew monoidal structure on $R$-$\textbf{Mod}$.
\end{remark}

A skew monoidal category is {\em left normal} if $\lambda$ is invertible. This implies that tensoring on the left by $I$ is an equivalence. Using the naturality of $\lambda$ and the invertibilty of $\lambda_{X}$ we deduce that
\begin{align*}
\lambda_{I \otimes X} & = 1_{I} \otimes {\lambda_{X}}
\end{align*}
A skew monoidal category is {\em right normal} if $\rho$ is invertible and {\em normal} if both $\lambda$ and $\rho$ are invertible.

\begin{remark}
If $\cal C$ is a left normal skew monoidal category then for any units $I$ and $J$ in $\cal C$ we have $I \otimes J \cong J$ and so $I \otimes J$ is also a unit by Proposition~\ref{Iso}. Thus, if $\cal C$ is a left normal skew monoidal category then the $\otimes$ from $\cal C$ applied to $\cal U(\cal C)$ gives $\cal U(\cal C)$ the structure of a skew semimonoidal category.
\end{remark}

\begin{lem}
If $({\cal C},\otimes,I,\alpha,\lambda,\rho)$ only satisfies \eqref{leftunit} and \eqref{midunit} with both $\lambda$ and $\rho$ being invertible then \eqref{unitunit} holds.
\end{lem}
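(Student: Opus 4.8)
The goal is to establish \eqref{unitunit}, which in this setting amounts to the single equation $\lambda_I\circ\rho_I = 1_I$. The plan is to first manufacture the weaker identity $(\lambda_I\circ\rho_I)\otimes 1_I = 1_{I\otimes I}$ by combining the two hypotheses \eqref{leftunit} and \eqref{midunit} evaluated at the unit, and then to strip off the superfluous tensor factor using the invertibility of $\rho$.

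For the first step I would instantiate both hypotheses at $X=Y=I$. Here \eqref{midunit} reads $(1_I\otimes\lambda_I)\circ\alpha_{I,I,I}\circ(\rho_I\otimes 1_I)=1_{I\otimes I}$, while \eqref{leftunit} reads $\lambda_{I\otimes I}\circ\alpha_{I,I,I}=\lambda_I\otimes 1_I$. Invoking the left-normal identity $\lambda_{I\otimes I}=1_I\otimes\lambda_I$ --- which follows from the naturality and invertibility of $\lambda$, one of our hypotheses --- the latter becomes $(1_I\otimes\lambda_I)\circ\alpha_{I,I,I}=\lambda_I\otimes 1_I$. Substituting this into the former collapses the $\alpha$ term and leaves $(\lambda_I\otimes 1_I)\circ(\rho_I\otimes 1_I)=1_{I\otimes I}$, which by functoriality of $\otimes$ is exactly $(\lambda_I\circ\rho_I)\otimes 1_I=1_{I\otimes I}$.

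The final, and genuinely delicate, step is the cancellation. One cannot simply delete the factor $(-)\otimes 1_I$, since this operation need not be faithful on hom-sets. Writing $e:=\lambda_I\circ\rho_I$, I would instead feed $e$ into the naturality square of $\rho$ at $I$, obtaining $\rho_I\circ e=(e\otimes 1_I)\circ\rho_I=1_{I\otimes I}\circ\rho_I=\rho_I$. Now the invertibility of $\rho$ --- the second hypothesis --- lets me cancel the monomorphism $\rho_I$ on the right and conclude $e=1_I$, i.e. $\lambda_I\circ\rho_I=1_I$, which is precisely \eqref{unitunit}. This routing through $\rho$-naturality is where I expect the main obstacle to lie, and it is exactly the point that consumes the two invertibility assumptions: $\lambda$ invertible secures the left-normal identity used in the second step, while $\rho$ invertible secures the cancellation here.
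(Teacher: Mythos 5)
Your proof is correct and takes essentially the same route as the paper's: the paper runs the identical diagram chase at a general $X$ (the outside being \eqref{midunit}, the upper triangle being \eqref{leftunit} combined with the left-normal identity $\lambda_{I\otimes X} = 1_I\otimes\lambda_X$) to obtain $(\lambda_I\circ\rho_I)\otimes 1_X = 1_{I\otimes X}$, then sets $X=I$ and invokes the assumption that $\rho$ is a natural isomorphism, exactly as in your cancellation step. The only differences are cosmetic: you instantiate at $X=I$ from the outset, and you spell out the $\rho$-naturality cancellation ($\rho_I\circ e = (e\otimes 1_I)\circ\rho_I = \rho_I$, then cancel the mono $\rho_I$) that the paper leaves implicit.
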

\begin{proof} 
Consider the following diagram:
\begin{align*}
\xymatrix{ (I\otimes I) \otimes X \ar[rr]^{\alpha} \ar[ddrr]^{\lambda_{I}\otimes 1_{X}} && I\otimes (I\otimes X) \ar[dd]^{1_{I}\otimes {\lambda_{X}} = {\lambda_{I\otimes X}}}
\\
\\ I\otimes X \ar[uu]^{\rho_{I}\otimes 1_{X}} \ar[rr]_{1} && I\otimes X}
\end{align*}
The outside commutes by \eqref{midunit}, the upper triangle commutes by \eqref{leftunit} where we used the assumption of $\lambda$ being invertible and the resulting identity that $1_{I}\otimes {\lambda_{X}} = {\lambda_{I\otimes X}}$, so then the lower triangle commutes. Now taking $X = I$ and using the assumption that $\rho$ is a natural isomorphism we get \eqref{unitunit}. \qedhere
\end{proof}

\subsection{Independence of the Axioms}
In this section we show that the five axioms for a skew monoidal category, given by equations \eqref{pentagon},\eqref{leftunit},\eqref{midunit},\eqref{rightunit} and \eqref{unitunit}, are independent. The underlying category we use is \textbf{Set} where the cartesian product between two sets is denoted by $\times$; we often identify the cartesian product of a one-point set with a set as the set itself and $X \times Y$ with $Y \times X$ in what follows.

For a set $M$, define a tensor product on \textbf{Set} by $X \otimes Y = M \times X \times Y$; this gives a functor $\xymatrix{\textbf{Set} \times \textbf{Set} \ar[r]^-{\otimes} & \textbf{Set}}$. If $M$ has a product $\xymatrix{M \times M \ar[r]^-{.} & M }$ there is a natural transformation $\alpha \colon M \times M \times X \times Y \times Z \longrightarrow M \times X \times M \times Y \times Z$ given by sending $(m,n,x,y,z)$ to $(m.n,x,m,y,z)$. Let $I$ be a one-point set, and $1 \in M$. The map $\lambda \colon I\otimes X (= M \times X) \longrightarrow X$ defined by sending $(m,x)$ to $x$ and the map $\rho \colon X \longrightarrow X \otimes I (=M \times X)$ defined by sending $x$ to $(x,1)$ are both natural transformations. With these maps, equation \eqref{pentagon} asks that the product on $M$ is associative, equation \eqref{midunit} asks that $1 \in M$ is a right identity on $M$, and equation \eqref{rightunit} asks that $1 \in M$ is a left identity on $M$. The remaining two equations are already satisfied under these maps and impose no extra structure on the set $M$. (These maps are based on the constructions in the first section of \cite{SkMon}.)

We take for $M$ the following three sets. The $M$ defined by the table on the left has a left and right identity but is not associative, so equation \eqref{pentagon} does not hold but the other four equations do.
\begin{align*}  
\fbox{\begin{tabular}{l|rlr} 
. & 1 & a & b \\ \hline
1 & 1 & a & b \\
a & a & 1 & b \\
b & b & a & 1 \\
\end{tabular}} &&&
\fbox{\begin{tabular}{l|rl}  
. & 1 & a  \\ \hline
1 & 1 & a  \\
a & 1 & a  \\
\end{tabular}} &
\fbox{\begin{tabular}{l|rl} 
. & 1 & a  \\ \hline
1 & 1 & 1  \\
a & a & a  \\
\end{tabular}}
\end{align*}
The $M$ defined by the table in the middle has no right identity, but has a left identity and is associative. In this case, equation \eqref{midunit} does not hold but the other four equations do. The $M$ defined by the table on the right has no left identity, but has a right identity and is associative. In this case, the equation \eqref{rightunit} does not hold but the other four equations do.

Thus each of the equations \eqref{pentagon}, \eqref{midunit} and \eqref{rightunit} is independent of the remaining four equations.
By reversing the tensor, direction of arrows and the order of composition we notice that equation \eqref{leftunit} and equation \eqref{rightunit} are dual, so statements such as independence holds for one if and only if it holds for the other. Thus independence of equation \eqref{leftunit} follows from the independence of equation \eqref{rightunit}. 

This leaves equation \eqref{unitunit}, for which we take the tensor product to be the cartesian product, so $X \otimes Y = X \times Y$.
The map $\alpha \colon (X \times Y) \times Z \longrightarrow  X \times (Y \times Z)$ is the usual associative isomorphism $(X \times Y) \times Z \cong  X \times (Y \times Z)$ and $I$ is given by $\{ a,b\}$. The map $\lambda \colon I \times X \longrightarrow X$ defined by sending $(i,x)$ to $x$ and the map $\rho \colon X \longrightarrow X \times I$ defined by sending $x$ to $(x,a)$ are natural transformations. In this case, equation \eqref{unitunit} asks for the elements of $I$ to be identical which is not the case here, so equation \eqref{unitunit} is not satisfied but it is easy to see that the other four equations hold.

With these four examples and duality we have shown that:
\begin{prop}
The five axioms for a skew monoidal category are independent.
\end{prop}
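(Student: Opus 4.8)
The plan is to prove independence in the usual way: for each of the five axioms I would exhibit a concrete choice of the data $(\mathcal{C},\otimes,I,\alpha,\lambda,\rho)$ on $\textbf{Set}$ that satisfies the remaining four axioms but fails the one in question. Almost all of the work is handled by the single parametrised construction set up above, in which a set $M$ with a binary operation $\cdot$ and a chosen element $1$ produces the tensor $X\otimes Y = M\times X\times Y$, the associator $\alpha$ sending $(m,n,x,y,z)\mapsto(m\cdot n,x,m,y,z)$, and the unit maps $\lambda(m,x)=x$ and $\rho(x)=(x,1)$ with $I$ a one-point set. The first step is to confirm the three correspondences asserted there: on elements, \eqref{pentagon} unwinds to associativity of $\cdot$, \eqref{midunit} to $1$ being a right identity, and \eqref{rightunit} to $1$ being a left identity, whereas \eqref{leftunit} and \eqref{unitunit} hold for every choice of $(M,\cdot,1)$.

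With this dictionary in hand, the second step is simply to feed in the three magmas tabulated above. The left-hand table gives a two-sided-unital but non-associative magma, realising the case where only \eqref{pentagon} fails; the middle table is associative with a left but no right identity, realising the failure of \eqref{midunit} alone; and the right-hand table is its mirror image, associative with a right but no left identity, realising the failure of \eqref{rightunit} alone. Each table thus yields a system of data violating exactly one axiom among these three.

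The third step disposes of \eqref{leftunit} by duality rather than a fresh example: reversing the tensor, the direction of all arrows, and the order of composition carries the five-axiom system to itself while interchanging \eqref{leftunit} and \eqref{rightunit}, so the independence of \eqref{rightunit} just established transports to independence of \eqref{leftunit}. The fourth and final step treats \eqref{unitunit} on its own, since the $M$-construction always satisfies it; here I would take $\otimes=\times$ the honest cartesian product with its standard associator, and let $I=\{a,b\}$ be a two-point set with $\lambda(i,x)=x$ and $\rho(x)=(x,a)$. Then \eqref{pentagon}, \eqref{leftunit}, \eqref{midunit} and \eqref{rightunit} are immediate, while \eqref{unitunit} forces $a=b$ and so fails. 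Collecting the five instances proves the proposition.

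I expect the only genuine care to be required in the first step: one must verify that the two ``free'' axioms \eqref{leftunit} and \eqref{unitunit} really do hold for an arbitrary $(M,\cdot,1)$, and that the element-level translations of the other three axioms isolate exactly associativity, right identity and left identity with no hidden side conditions smuggled in by $\alpha$. Once that bookkeeping is pinned down, the remaining steps amount to reading off the tables and invoking the duality already observed for \eqref{leftunit} and \eqref{rightunit}.
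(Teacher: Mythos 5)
Your proposal is correct and follows essentially the same route as the paper: the same $M$-parametrised tensor $X\otimes Y=M\times X\times Y$ with the dictionary (\eqref{pentagon} $\leftrightarrow$ associativity, \eqref{midunit} $\leftrightarrow$ right identity, \eqref{rightunit} $\leftrightarrow$ left identity), the same three tabulated magmas, the same duality argument for \eqref{leftunit}, and the same cartesian-product example with $I=\{a,b\}$ for \eqref{unitunit}. The only addition is your explicit flagging of the verification that \eqref{leftunit} and \eqref{unitunit} hold for arbitrary $(M,\cdot,1)$, which the paper asserts without spelling out; this is routine and your plan handles it.
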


\subsection{Monoidal Functors}

Let $({\cal C},\otimes',I,\alpha',\lambda',\rho')$ and $({\cal D},\otimes,J,\alpha,\lambda,\rho)$ be skew monoidal categories. A {\em monoidal functor} from $\cal C$ to $\cal D$ is a triple $(F,\varphi,F_{0})$ where $F\colon \cal C \longrightarrow \cal D$ is a functor of the underlying categories, $F_{0}$ is a morphism $J \longrightarrow F(I)$ in $\cal D$ and $\varphi$ is a natural transformation with components $\varphi_{X,Y}\colon F(X)\otimes F(Y) \longrightarrow F(X\otimes' Y)$ such that the following diagrams commute.
\begin{align}\label{funas}
\xymatrix{ (F(X)\otimes F(Y))\otimes F(Z) \ar[rrr]^{\alpha_{F(X),F(Y),F(Z)}} \ar[d]_{\varphi_{X,Y}\otimes 1_{F(Z)}} &&& F(X)\otimes (F(Y)\otimes F(Z)) \ar[d]^{1_{F(X)}\otimes \varphi_{Y,Z}}
\\ F(X\otimes' Y)\otimes F(Z) \ar[d]_{\varphi_{X\otimes' Y,Z}} &&& F(X)\otimes F(Y\otimes' Z) \ar[d]^{\varphi_{X,Y\otimes' Z}}
\\ F((X\otimes' Y)\otimes' Z) \ar[rrr]_{F(\alpha'_{X,Y,Z})} &&& F(X\otimes' (Y\otimes' Z))}
\end{align} 
\begin{equation}\label{FunUn} 
\begin{aligned}
\vbox{\xymatrix{
J\otimes F(X) \ar[rr]^{\lambda_{F(X)}} \ar[dd]_{F_{0}\otimes 1_{F(X)}} && F(X)
\\
\\ F(I)\otimes F(X) \ar[rr]_{\varphi_{I,X}} \ar@{} [rruu]| {} && F(I\otimes' X) \ar[uu]_{F(\lambda'_{X})} }}
  \qquad
\vbox{\xymatrix{
F(X)  \ar[dd]_{F(\rho'_{X})} \ar[rr]^{\rho_{F(X)}} && F(X)\otimes J \ar[dd]^{1_{F(X)\otimes F_{0}}}
\\
\\ F(X\otimes' I) \ar@{} [rruu]| {} && F(X)\otimes F(I) \ar[ll]^{\varphi_{X,I}} }}
\end{aligned}
\end{equation}
A monoidal functor between skew monoidal categories is {\em normal} if $F_{0}$ is an isomorphism, and is {\em strong} if both $\varphi$ and $F_{0}$ are isomorphisms. If the skew monoidal categories were monoidal then these are the usual notions of lax, normal, and strong monoidal functors.

\begin{prop}\label{UniqueFun}
Let $({\cal C},\otimes',I,\alpha',\lambda',\rho')$ and $({\cal D},\otimes,J,\alpha,\lambda,\rho)$ be skew monoidal categories and let $F$ be a functor and $\varphi$ a natural transformation such that \eqref{funas} holds. Then there is at most one $F_{0}$ such that \eqref{FunUn} holds.
\end{prop}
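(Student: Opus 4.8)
The plan is to adapt the Kelly-style argument of Proposition~\ref{unique}. Rather than produce a closed formula for $F_{0}$, I will take two morphisms $F_{0},\widetilde{F_{0}}\colon J \longrightarrow F(I)$, each making both diagrams of \eqref{FunUn} commute, and show directly that they agree. The decisive structural point, to which I return below, is that the two triangles of \eqref{FunUn} must be invoked for \emph{different} candidates: the left-hand triangle for $F_{0}$ and the right-hand triangle for $\widetilde{F_{0}}$.

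First I would record a preliminary identity valid for \emph{any} $g\colon J \longrightarrow F(I)$, namely $g = \lambda_{F(I)} \circ (1_{J}\otimes g) \circ \rho_{J}$. This follows because naturality of $\lambda$ gives $\lambda_{F(I)} \circ (1_{J}\otimes g) = g \circ \lambda_{J}$, while $\lambda_{J}\circ \rho_{J} = 1_{J}$ by \eqref{unitunit} for $(\mathcal{D},J)$. Applying this to $g = \widetilde{F_{0}}$ presents $\widetilde{F_{0}}$ as $\lambda_{F(I)} \circ (1_{J}\otimes \widetilde{F_{0}}) \circ \rho_{J}$, which is the starting point of the computation.

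Next I would rewrite $\lambda_{F(I)}$ by the left diagram of \eqref{FunUn} at $X = I$ for the candidate $F_{0}$, namely $\lambda_{F(I)} = F(\lambda'_{I}) \circ \varphi_{I,I} \circ (F_{0}\otimes 1_{F(I)})$. This creates the composite $(F_{0}\otimes 1_{F(I)}) \circ (1_{J}\otimes \widetilde{F_{0}})$, which by the interchange law equals $(1_{F(I)}\otimes \widetilde{F_{0}}) \circ (F_{0}\otimes 1_{J})$; naturality of $\rho$ then converts $(F_{0}\otimes 1_{J})\circ \rho_{J}$ into $\rho_{F(I)}\circ F_{0}$. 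At this stage the inner part of the expression is exactly $\varphi_{I,I} \circ (1_{F(I)}\otimes \widetilde{F_{0}}) \circ \rho_{F(I)}$, which by the right diagram of \eqref{FunUn} at $X = I$ for the candidate $\widetilde{F_{0}}$ equals $F(\rho'_{I})$. The whole composite then collapses to $F(\lambda'_{I}) \circ F(\rho'_{I}) \circ F_{0} = F(\lambda'_{I}\circ \rho'_{I})\circ F_{0}$, and since $\lambda'_{I}\circ \rho'_{I} = 1_{I}$ by \eqref{unitunit} for $(\mathcal{C},I)$ this is $F_{0}$. Hence $\widetilde{F_{0}} = F_{0}$. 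The chain can equally be packaged as one commuting diagram assembled from two naturality squares, an interchange square, and the two triangles of \eqref{FunUn}.

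I expect the main obstacle to be recognising that the symmetry between the two candidates must be broken. Using both triangles of \eqref{FunUn} for the \emph{same} $F_{0}$ merely returns the tautology $F_{0}=F_{0}$, since each unit condition trades one occurrence of $F_{0}$ for another; genuine progress is made only when the left condition is read off $F_{0}$ and the right condition off $\widetilde{F_{0}}$, so that the two occurrences are consumed together and \eqref{unitunit} in $\mathcal{C}$ can close the argument. I would also note, as with the unused pentagon in Proposition~\ref{unique}, that the associativity axiom \eqref{funas} plays no role in this proof.
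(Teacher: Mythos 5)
Your proposal is correct and is essentially the paper's own proof: the paper likewise starts from $1_J=\lambda_J\circ\rho_J$ (equation \eqref{unitunit} in $\cal D$), applies the left triangle of \eqref{FunUn} to one candidate and the right triangle to the other, uses the interchange law together with naturality of $\lambda$ and $\rho$, and closes with $F(\lambda'_I)\circ F(\rho'_I)=1_{F(I)}$ from \eqref{unitunit} in $\cal C$; your version merely swaps which candidate feeds which triangle, which is immaterial since both are arbitrary. Your structural remarks --- that the symmetry between the candidates must be broken, and that \eqref{funas} is never used --- are likewise borne out by the paper's diagram.
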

\begin{proof}
Let $F_{0}^{*}$ be another such morphism in $\cal D$, so in particular $F_{0}^{*} \colon J \longrightarrow F(I)$ satisfies the equations in \eqref{FunUn}. Consider the following diagram.
\begin{equation*}
\xymatrix{J \ar[rr]^{\rho_{J}} \ar[dd]_{F^{*}_{0}} \ar@/^3pc/[rrrr]^{1_{J}} && J\otimes J \ar[rr]^{\lambda_{J}} \ar[dd]_{F_{0}^{*}\otimes 1_{J}} \ar[ddddrr]^{1_{J}\otimes F_{0}} && J \ar@/^3pc/[dddddd]^{F_{0}}
\\
\\ F(I) \ar[rr]^{\rho_{F(I)}} \ar@/_3.8pc/[dddd]_{1_{F(I)}} \ar[dd]_{F(\rho_{I})} && F(I)\otimes J \ar[dd]_{1_{F(I)}\otimes F_{0}}
\\
\\ F(I\otimes' I) \ar[ddrrrr]^{F(\lambda_{I})} && F(I)\otimes F(I) \ar[ll]_{\varphi_{I,I}} && J\otimes F(I) \ar[ll]_{F_{0}^{*}\otimes {1_{F(I)}}} \ar[dd]_{\lambda_{F(I)}}
\\
\\ F(I) \ar[rrrr]_{1_{F(I)}} &&&& F(I)
}
\end{equation*}
The part involving the semicircles on the top and left hand side commute by \eqref{unitunit}. The top square commutes by the naturality of $\rho$, and the square below it commutes by the right hand equation in \eqref{FunUn}. The triangle next to the squares commutes by the interchange law. The bottom triangle commutes by the left hand equation in \eqref{FunUn} and the remaining part of the diagram (on the right) commutes by the naturality of $\lambda$. The commutativity of the exterior gives the required uniqueness. \qedhere
\end{proof}

\begin{remark}
If $F_{0}\colon J \longrightarrow F(I)$ is an isomorphism in $\cal D$ then by Proposition~\ref{closedunit}, $F(I)$ is also a unit in $\cal D$. Now, by Proposition~\ref{unique}, there is a unique morphism between these units, namely $\varphi_{J,F(I)}$ and using the naturality of $\lambda$ it can be shown that $\varphi_{J,F(I)} = F_{0}$. 
\end{remark}

\begin{remark} 
This lemma generalises the uniqueness result of Proposition~\ref{unique}, which we may recover on taking the two skew monoidal categories to be the same, $F$ to be the identity functor, and $\varphi$ the identity natural transformation. It also implies the uniqueness of units for monoids in a skew monoidal category by taking $\cal C$ $= 1$.
\end{remark}

\begin{remark}
We denote by \textbf{SkMon} the category with {\em objects}  skew monoidal categories and {\em 1-cells} monoidal functors and \textbf{SkSemiMon} the category with {\em objects} skew semimonoidal categories and {\em 1-cells} semimonoidal functors (drop the $F_0$ conditions for the unit).

We denote the obvious forgetful functor where we drop all reference to units and any associated conditions by
 $V \colon \textbf{SkMon} \longrightarrow \textbf{SkSemiMon}$ .
For an object $\cal C$ of \textbf{SkSemiMon}, that is, $\cal C$ is a skew semimonoidal category, the fibre of $V$ at $\cal C$ is the category $\cal U(\cal C)$ of $\cal C$.

The uniqueness of $F_0$ in Proposition~\ref{UniqueFun} implies that the forgetful functor $V$ is faithful. Moreover, the uniqueness and existence results from Section 2.2 imply that $V$ is also full on isomorphisms in \textbf{SkSemiMon}, and by Proposition~\ref{closedunit} $V$ is also an isofibration. 
\end{remark}

\section{Skew Monoidales}

The results of the previous section can be lifted to skew monoidales, these were first defined in \cite{SkMon} as an enriched version of a skew monoidal category. So in this section we internalise the main result of the previous section. By the coherence results of \cite{GPS}, however, it will suffice to work in a Gray monoid.  
 
Let $\cal B$ be a Gray monoid; see \cite{Day} for an explicit definition. Note that in a Gray monoid, for the 1-cells $f : A \longrightarrow A'$ and $g : B \longrightarrow B'$ , the only structural 2-cells are the invertible 2-cells of the form 
\begin{align*}
\xymatrix {
A\otimes B \ar[rr]^{1\otimes g} \ar[d]_{f \otimes 1} \ar@{}[drr]|{\cong} && A\otimes B' \ar[d]^{f\otimes 1}
\\ A'\otimes B \ar[rr]_{1\otimes g} && A'\otimes B'}
\end{align*} 
or tensors and composites thereof.
In this section we denote them with the symbol $\cong$ as above. These 2-cells satisfy four axioms which we do not list but will appeal to throughout the rest of this section; see \cite{Day} once again. We write $I$ for the unit object of the Gray monoid.
 
A {\em skew semimonoidal} structure on an object $A$ in $\cal B$ 
consists of a morphism $p \colon A\otimes A \longrightarrow A$ called the {\em tensor product}, and a 2-cell 

\begin{equation*}\label{smonoidale1}
\xymatrix{
 A\otimes A\otimes A \ar[rr]^{1\otimes p}_{~}="1" \ar[d]_{p\otimes 1}  && A\otimes A \ar[d]^p
  \\  A\otimes A \ar[rr]_p^{~}="2"
  \ar@{=>}"2";"1"^\alpha && A }
 \end{equation*}
subject to the "pentagon" axiom
\begin{equation}\label{mpent} 
\vbox {\xymatrix@!=0.9pc{
&& A\otimes A\otimes A \ar[dd]_{1\otimes p}^{~}="2" \ar[drr]^{p\otimes 1}
\\ A\otimes A\otimes A\otimes A \ar[urr]^{p\otimes 1\otimes 1} \ar[dd]_{1\otimes 1\otimes p} \ar@{}[drr]|{\cong} &&&&
A\otimes A \ar[dd]^p_{~}="1" 
\\ && A\otimes A\ar[drr]^p_{~}="3" 
\\ A\otimes A\otimes A \ar[drr]_{1\otimes p}^{~}="4" 
\ar[urr]^{p\otimes 1} &&&& A
\\ && A\otimes A \ar[urr]_p
\ar@{=>}"1";"2"_{\alpha}
\ar@{=>}"3";"4"_{\alpha}
  }}
  = 
\vbox {\xymatrix@!=0.9pc{
&& A\otimes A\otimes A \ar[drr]^{p\otimes 1}_{~}="3"
\\ A\otimes A\otimes A\otimes A \ar[urr]^{p\otimes 1\otimes 1} \ar[dd]_{1\otimes 1\otimes p}^{~}="4" 
\ar[drr]_{1\otimes p\otimes 1}^{~}="1" &&&&
A\otimes A \ar[dd]^p_{~}="5" 
\\ && A\otimes A\otimes A \ar[dd]^{1\otimes p}_{~}="2" \ar[urr]_{p\otimes 1}
\\ A\otimes A\otimes A \ar[drr]_{1\otimes p} &&&& A
\\ && A\otimes A \ar[urr]_p_{~}="6"
\ar@{=>}"3";"1"_{{\alpha}\otimes 1}
\ar@{=>}"2";"4"_{1\otimes {\alpha}}
\ar@{=>}"5";"6"+/va(145)+3.62pc/_{\alpha}}}  
\end{equation}

An object $A$ of $\cal B$ equipped with such a skew semimonoidal structure is called a {\em skew semimonoidale} in $\cal B$;
we denote it by $(A,p,\alpha)$.

A skew semimonoidale in the cartesian monoidal 2-category \textbf{Cat} of categories, functors and natural transformations is a skew semimonoidal category.

If $(A,p,\alpha)$ is a skew semimonoidale in $\cal  B$, we form a category $\cal U$$(A,p,\alpha)$ as follows. The {\em objects} are triples $(j,\lambda,\rho)$, called {\em units}, where $j$ is a morphism $j \colon I \longrightarrow A$ in $\cal B$  equipped with 2-cells, denoted by $\lambda$ and $\rho$,  called the {\em left unit} and {\em right unit constraints}. These have the form 
 
$$\vbox{\xymatrix {
A\otimes A \ar[ddrr]_p && A \ar[ll]_{j\otimes 1} \ar[dd]^1_{~}="1"
\\
\\ && A
\ar@{=>}"1"+/va(-199)+3pc/;"1"_{\lambda}
}}
\quad   \quad
\vbox{\xymatrix {
&& A\ar[dd]^{1\otimes j}
\\
\\ A \ar[rruu]^1_{~}="1" && A\otimes A \ar[ll]^p
\ar@{<=}"1"-/va(-199)+3pc/;"1"_{\rho}
}}$$
and are required to satisfy the following four equations
\begin{equation}\label{mleft}
\vbox{\xymatrix {
A\otimes A \ar[d]_{1\otimes p} \ar[r]^{j\otimes 1\otimes1} \ar@{}[dr]|{\cong}
 & A\otimes A\otimes A \ar[d]_{1\otimes p}^{~}="2" \ar[r]^{p\otimes 1}  & A\otimes A \ar[d]^p_{~}="1"
\\ A \ar[r]^{j\otimes 1} \ar@/_2pc/[rr]_1^{~}="3" & A\otimes A \ar[r]^p & A
\ar@{=>}"1";"2"_{\alpha}
\ar@{<=}"3";"3"+/va(90)+1.1pc/^{\lambda} 
}} 
 =
\vbox{\xymatrix {
A\otimes A \ar[d]_{1\otimes p} \ar[r]^{j\otimes 1\otimes1} \ar@/_1.7pc/[rr]_1^{~}="1"
 & A\otimes A\otimes A  \ar[r]^{p\otimes 1}  & A\otimes A \ar[d]^p
\\ A \ar[rr]_1 \ar@{}[rr]^{\cong} && A
\ar@{<=}"1";"1"+/va(90)+1.1pc/^{\lambda\otimes 1}
}}
\end{equation}
\begin{equation}\label{mright}
\vbox{\xymatrix {
A\otimes A \ar[d]_{p\otimes 1} \ar[r]^{1\otimes 1\otimes j} \ar@{}[dr]|{\cong}
 & A\otimes A\otimes A \ar[d]_{p\otimes 1} \ar[r]^{1\otimes p}_{~}="2"  & A\otimes A \ar[d]^p
\\ A \ar[r]_{1\otimes j} \ar@/_2pc/[rr]_1^{~}="3" & A\otimes A \ar[r]_p^{~}="1" & A
\ar@{=>}"1";"2"_{\alpha}
\ar@{=>}"3";"3"+/va(90)+1.1pc/^{\rho} 
}} 
 = 
\vbox{\xymatrix {
A\otimes A \ar[d]_{p\otimes 1} \ar[r]^{1\otimes 1\otimes j} \ar@/_1.7pc/[rr]_1^{~}="1"
 & A\otimes A\otimes A  \ar[r]^{1\otimes p}  & A\otimes A \ar[d]^p
\\ A \ar[rr]_1 \ar@{}[rr]^{\cong} && A
\ar@{=>}"1";"1"+/va(90)+1.1pc/^{1\otimes {\rho}}
}}
\end{equation}
\begin{equation}\label{mmid}
\vbox{\xymatrix {
A\otimes A \ar[rr]^1_{~}="1" \ar[dr]_{1\otimes j\otimes 1} \ar@/_1.6pc/[ddr]_1^{~}="2" && A\otimes A \ar[dd]^p_{~}="3"
\\ & A\otimes A\otimes A \ar[d]_{1\otimes p}^{~}="4" \ar[ru]_{p\otimes 1}
\\ & A\otimes A \ar[r]_p & A
\ar@{=>}"1";"1"+/va(-90)+1.3pc/^{{\rho}\otimes 1}
\ar@{<=}"2";"2"+/va(-315)+1.3pc/_{1\otimes {\lambda}}
\ar@{=>}"3";"4"^{\alpha}
}} 
\qquad = \qquad 
\vbox{\xymatrix {
A\otimes A \ar[rr]^1 \ar[dd]_1 \ar@{}[ddrr]|{\equiv} && A\otimes A \ar[dd]^p
\\
\\ A\otimes A \ar[rr]_p && A
}} 
\end{equation}
\begin{equation}\label{muu} 
\vbox{\xymatrix {
I \ar[rr]^j \ar[d]_j \ar@{}[drr]|{\cong} && C \ar[d]^{1\otimes j} \ar@/^0.9pc/[drr]^1_{~}="1"
\\ C \ar[rr]_{j\otimes 1} \ar@/_1.7pc/[rrrr]_1_{~}="2" && C\otimes C \ar[rr]_p && C
\ar@{=>}"1";"1"+/va(-135)+1.3pc/^{\rho}
\ar@{<=}"2";"2"+/va(90)+1.3pc/^{\lambda}
}} 
\qquad =\qquad 
\vbox{\xymatrix {
I \ar[rr]^j \ar[d]_j \ar@{}[drr]|{\equiv} && C \ar[d]^1
\\ C \ar[rr]_1 && C
}}
\end{equation}

The {\em arrows} of $\cal U$$(A,p,\alpha)$ from $(j',\lambda',\rho')$ to $(j,\lambda,\rho)$ are given by the 2-cells $\xymatrix{j' \ar[r]^{\varphi} & j}$
 in $\cal B$ satisfying the following equations
\begin{equation}\label{sComp}
\vbox{\xymatrix {p(j' \otimes 1) \ar[d]_{p(\varphi \otimes 1)} \ar[dr]^{\lambda'}
\\ p(j \otimes 1) \ar[r]_(.6){\lambda} & 1
}}
\qquad \qquad
\vbox{\xymatrix{ & p(1 \otimes j') \ar[d]^{p(1 \otimes {\varphi})}
\\ 1 \ar[r]_(.4){\rho} \ar[ru]^{\rho'} & p(1 \otimes j)
}} 
\end{equation}
In the case of a skew semimonoidal category seen as a skew semimonoidale $(A,p,j)$ for \textbf{Cat}, this agrees with the previous definition.
 
Given $(j',\lambda',\rho')$ and $(j,\lambda,\rho)$ in $\cal U$$(A,p,\alpha)$
we denote the following 2-cell by $\varphi_{j',j}$.
\begin{equation*}
\xymatrix{
&& I \ar[lld]_{j'} \ar[rrd]^j \ar@{}[dd]|{\cong} \\ A\ar[drr]^{1\otimes j} \ar[dddrr]_1^{~}="1"
&&&& A\ar[dll]_{{j'}\otimes 1}
\ar[dddll]^1_{~}="2" 
\\ && A\otimes A\ar[dd]_p_{~}="3"
\ar@{=>} ;"2"_{\lambda'}
\ar@{=>}"1";_{\rho}
\\ 
\\ && A }
\end{equation*}
When no confusion will arise we drop the subscripts and simply write $\varphi$.

\begin{lem}
The 2-cell $\varphi$ is an arrow in $\cal U$$(A,p,\alpha)$.
\end{lem}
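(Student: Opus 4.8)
The plan is to verify directly that the 2-cell $\varphi = \varphi_{j',j}$ satisfies both equations of \eqref{sComp}, by transporting the argument of Lemma~\ref{compat} into the Gray monoid ${\cal B}$. Since the second equation of \eqref{sComp} is obtained from the first by the symmetry that reverses the tensor and the directions of the 2-cells, interchanging the roles of $\lambda$ and $\rho$ and of \eqref{mleft} and \eqref{mright}, it will suffice to establish the first equation carefully and then appeal to this duality. As in the one-dimensional case, I expect that neither the pentagon \eqref{mpent} nor the unit--unit axiom \eqref{muu} will be needed.

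For the first equation, $\lambda \cdot p(\varphi \otimes 1) = \lambda'$, the first step is to substitute the defining pasting of $\varphi$, whisker it by $p(-\otimes 1)$, and compose vertically with $\lambda$, thereby writing the left-hand side as a single pasting assembled from $\rho$, $\lambda'$, $\lambda$, the associator $\alpha$, and the structural interchange 2-cells $\cong$ of ${\cal B}$. This pasting is meant to be the exact two-dimensional rendering of the diagram in the proof of Lemma~\ref{compat}: the subregion carrying $\alpha$ against $\lambda'$ is the left unit axiom \eqref{mleft} for the source $(j',\lambda',\rho')$; after using an interchange cell $\cong$ to re-associate the outer $\lambda$ (the part that played the role of naturality of $\lambda$ before), the remaining subregion built from $\rho$, $\lambda$ and $\alpha$ is the middle unit axiom \eqref{mmid} for the target $(j,\lambda,\rho)$. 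Applying these identities in turn should collapse the whole pasting to $\lambda'$, which is the desired equation. The second equation, $p(1\otimes\varphi)\cdot\rho' = \rho$, then follows by the dual manipulation, now using \eqref{mmid} together with \eqref{mright} and the interchange cells in place of naturality of $\rho$.

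The conceptual content is a line-by-line translation of Lemma~\ref{compat}, so I expect the main obstacle to be purely two-dimensional bookkeeping. In \textbf{Cat} the naturality of $\lambda$ was a strict commuting square, whereas here it is witnessed by the invertible structural 2-cells $\cong$, which must be inserted at the right places and then cancelled by appeal to the four Gray monoid axioms. Verifying that the whiskered copy of $\varphi$, the associator $\alpha$, and these interchange cells really do paste into a single well-defined 2-cell, and that the structural cells cancel cleanly so that \eqref{mleft} and \eqref{mmid} apply on the nose, is where the care is required; once the diagram is set up correctly the reduction itself is routine.
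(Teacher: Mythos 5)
Your proposal is correct and takes essentially the same approach as the paper: the paper verifies the $\rho$-equation of \eqref{sComp} explicitly, rewriting the pasting by \eqref{mright}, then the Gray-monoid interchange axioms, then collapsing by \eqref{mmid}, and declares the $\lambda$-equation ``similar,'' while you carry out the mirror-image computation (the $\lambda$-equation via \eqref{mleft}, an interchange cell in place of naturality, and \eqref{mmid}) and dispose of the $\rho$-equation by the tensor-reversing duality. Your identification of which axioms enter (and that neither \eqref{mpent} nor \eqref{muu} is needed) matches the paper's proof exactly.
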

\begin{proof}
We need to show that $\varphi$ satisfies \eqref{sComp}. We shall only verify the equation involving $\rho$; the one involving $\lambda$ is similar.

The composite $ \xymatrix { 1 \ar[rr]^{\rho'} && p(1 \otimes j') \ar[rr]^{p(1 \otimes {\varphi})} &&
 p(1 \otimes j)}$ appearing in \eqref{sComp} may be constructed as the following pasting composite.
 
\begin{equation*}
\xymatrix@!=0.9pc {
&&&& A \ar[lld]^{1\otimes {j'}}_{~}="5" \ar[rrd]_{1\otimes j}\ar@/_1.1pc/[lllld]_1^{~}="4" \ar@{}[dd]|{\cong}
\\ A \ar@/_1.1pc/[ddddrrrr]_1
&& A \otimes A\ar[drr]^{1\otimes 1\otimes j} \ar[dddrr]_1^{~}="1" \ar[ll]^p \ar@{}[ddd]|{\cong}
&&&& A\otimes A\ar[dll]_{1\otimes {j'}\otimes 1}
\ar[dddll]^1_{~}="2" 
\\ &&&& A\otimes A\otimes A\ar[dd]_{1\otimes p}_{~}="3"
\ar@{=>} ;"2"_{1\otimes {\lambda'}}
\ar@{=>}"1";_{1\otimes {\rho}}
\ar@{=>}"4";"4"+/va(-75)+1.5pc/_{\rho'}
\\ 
\\ &&&& A\otimes A \ar[d]^p
\\ &&&& A}
\end{equation*}
Using equation \eqref{mright} this is equal to  
\begin{equation*}   
\xymatrix@!=0.9pc {
&&&& A \ar[lld]^{1\otimes {j'}}_{~}="5" \ar[rrd]_{1\otimes j}\ar@/_1.1pc/[lllld]_1^{~}="4" \ar@{}[dd]|{\cong}
\\ A \ar@/_1.1pc/[ddddrrrr]_1^{~}="6" \ar[drr]_{1\otimes{j}}
&& A \otimes A\ar[drr]^{1\otimes 1\otimes j}  \ar[ll]^p \ar@{}[d]|{\cong}
&&&& A\otimes A\ar[dll]_{1\otimes {j'}\otimes 1}
\ar[dddll]^1_{~}="2" 
\\ && A\otimes A \ar[dddrr]_p^{~}="5" && A\otimes A\otimes A\ar[dd]^{1\otimes p}_{~}="3" \ar[ll]_{p\otimes 1}
\ar@{=>} ;"2"_{1\otimes {\lambda'}}
\ar@{=>}"4";"4"+/va(-75)+1.5pc/_{\rho'}
\ar@{=>}"5";"3"_\alpha
\ar@{=>}"6";"6"+/va(45)+1.6pc/_\rho
\\ 
\\ &&&& A\otimes A \ar[d]^p
\\ &&&& A}
\end{equation*} 
which, by properties of Gray monoids, is equal to 
\begin{equation*}
\xymatrix@!=0.8pc {
&&&& A \ar[rrd]^{1\otimes j}\ar@/_1.1pc/[lllld]_1^{~}="4" 
\\ A \ar@/_1.1pc/[ddddrrrr]_1^{~}="6" \ar[drr]^{1\otimes{j}} \ar@{}[rrrrr]|{\cong}
&& 
&&&& A\otimes A\ar[dll]_{1\otimes {j'}\otimes 1}\ar@/_1.4pc/[lllld]_1^{~}="7"
\ar[dddll]^1_{~}="2" 
\\ && A\otimes A \ar[dddrr]_p^{~}="5" && A\otimes A\otimes A\ar[dd]^{1\otimes p}_{~}="3" \ar[ll]_{p\otimes 1}
\ar@{=>} ;"2"_{1\otimes {\lambda'}}
\ar@{=>}"5";"3"_\alpha
\ar@{=>}"6";"6"+/va(45)+1.6pc/_\rho
\ar@{=>}"7";"7"+/va(-59)+1.9pc/_{\rho'\otimes 1}
\\ 
\\ &&&& A\otimes A \ar[d]^p
\\ &&&& A}
\end{equation*}
which finally, by equation \eqref{mmid}, is equal to

\begin{equation*} 
\xymatrix@!=0.6pc {
&&&& A \ar[rrd]^{1\otimes j}\ar@/_1.1pc/[lllld]_1^{~}="4" 
\\ A \ar@/_1.1pc/[ddddrrrr]_1^{~}="6" \ar[drr]^{1\otimes{j}} \ar@{}[rrrr]|{\cong}
&& 
&&&& A\otimes A \ar@/_1.4pc/[lllld]_1^{~}="7"
\ar[dddll]^1_{~}="2" 
\\ && A\otimes A \ar[dddrr]^p^{~}="5" 
\ar@{=>}"6";"6"+/va(45)+1.6pc/_{\rho} 
\\ 
\\ &&&& A\otimes A \ar[d]^p 
\\ &&&& A \ar@{}[luuuuu]|{\equiv}}
\end{equation*} \qedhere
\end{proof}
\begin{prop}
There is exactly one morphism from $(j,\lambda,\rho)$ to $(j',\lambda',\rho')$ in $\cal U$$(A,p,\alpha)$.
\end{prop}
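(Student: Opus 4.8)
The plan is to internalise the proof of Proposition~\ref{unique}. Existence is already settled: the preceding lemma, applied with the two units interchanged, shows that the 2-cell $\varphi:=\varphi_{j,j'}\colon j\Rightarrow j'$ is an arrow from $(j,\lambda,\rho)$ to $(j',\lambda',\rho')$. So the entire content is uniqueness: given any 2-cell $\tau\colon j\Rightarrow j'$ satisfying the two equations \eqref{sComp} (with the primes transposed, so that in particular $\lambda'\cdot p(\tau\otimes 1)=\lambda$), I must show $\tau=\varphi$. The categorical argument of Proposition~\ref{unique} rested on exactly three facts, and I would replay each in the Gray monoid: the naturality of the target right unit, the $\lambda$-triangle of \eqref{Comp} for $\tau$, and the unit axiom \eqref{unitunit} of the target. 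Here their roles are played by, respectively, the structural interchange 2-cell $\cong$ of the Gray monoid, the first equation of \eqref{sComp}, and equation \eqref{muu} for $(j',\lambda',\rho')$. Just as in the one-dimensional case, neither the pentagon \eqref{mpent} nor the constraints \eqref{mleft}, \eqref{mright}, \eqref{mmid} will be needed.

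Concretely, I would exhibit $\tau$ as a single pasting composite and rewrite it into $\varphi$ in three moves. First, compose $\tau$ with the identity 2-cell on $j'$ and expand that identity using \eqref{muu} for $(j',\lambda',\rho')$; this presents $\tau$ as the vertical composite of $\rho'$, a whiskering of $\tau$, and $\lambda'$ (the analogue of $\tau=\lambda'_{J}\circ\rho'_{J}\circ\tau$). Second, use the structural interchange isomorphism of the Gray monoid to slide $\tau$ past $\rho'$, converting the whiskered $\rho'\cdot\tau$ into $p(\tau\otimes 1)\cdot\rho'$; this is the internalisation of the naturality square for $\rho'$ (the analogue of $\rho'_{J}\circ\tau=(\tau\otimes 1_{J})\circ\rho'_{I}$). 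Third, apply the $\lambda$-equation of \eqref{sComp} for $\tau$, which replaces $\lambda'\cdot p(\tau\otimes 1)$ by $\lambda$. What remains is the pasting of $\rho'$ (for $j'$) with $\lambda$ (for $j$), which is by definition $\varphi=\varphi_{j,j'}$. Hence $\tau=\varphi$.

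I expect the main obstacle to be bookkeeping rather than anything conceptual: in \textbf{Cat} the naturality of $\rho'$ is a single commuting square, whereas in a Gray monoid the corresponding move requires inserting the appropriate invertible structural 2-cell and checking, via its defining axioms, that it mediates precisely between the two whiskerings of $\tau$ occurring above and below. Care is also needed with the whiskering conventions — which tensor factor each identity $1$ sits in, and on which side $\tau$ is whiskered — so that the boundaries of the three pasting regions match up and compose to the 2-cell defining $\varphi$. Once the interchange cell is correctly placed, the remaining equalities are forced by \eqref{sComp} and \eqref{muu} exactly as in the one-dimensional proof.
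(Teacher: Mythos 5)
Your proposal is correct and follows essentially the same route as the paper: the paper's proof uses exactly your three ingredients --- equation \eqref{muu} for the target unit, the naturality of the Gray structural cell $\cong$ (internalising the naturality of $\rho'$), and the $\lambda$-half of \eqref{sComp} --- merely traversed in the opposite direction, starting from $\varphi$ and rewriting it into $\tau$ rather than expanding $\tau$ into $\varphi$. Your observation that the pentagon \eqref{mpent} and the constraints \eqref{mleft}, \eqref{mright}, \eqref{mmid} are not needed also matches the paper's proof (the lemma on existence is where \eqref{mright} and \eqref{mmid} enter).
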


\begin{proof} 
Let $\tau$ be another such 2-cell  $\xymatrix {
I\ar@/^1.4pc/[rr]^j_{~}="1" \ar@/_1.4pc/[rr]_{j'} && A
\ar@{=>}"1";"1"+/va(-85)+1.9pc/_{\tau}}$ in $\cal U$$(A,p,\alpha)$, so, in particular it satisfies 
\begin{equation}\label{tcom}  
\xymatrix { A \ar[dd]_{j\otimes 1}^{~}="1" \ar[ddrr]^1 \\
\\  A\otimes A \ar[rr]_p && A
\ar@{=>}"1";"1"+/va(9)+1.8pc/_{\lambda}}
=
\xymatrix { A \ar@/_1.5pc/[dd]_{j\otimes 1}^{~}="1"
\ar@/^1.5pc/[dd]^{j'\otimes 1} 
\ar@/^1.0pc/[ddrr]^1_{~}="2"
\\ 
\\  A\otimes A \ar[rr]_p^{~}="3" && A
\ar@{=>}"1";"1"+ /va(0)+1.8pc/_{\tau\otimes 1}
\ar@{<=}"2";"3"-/va(-60)+0.8pc/^{\lambda'}
}
\end{equation}
\hspace{.4in}

By assumption \eqref{tcom}, the 2-cell $\phi$
\begin{equation*} 
\xymatrix@!=0.6pc{
&& I \ar[lld]_j \ar[rrd]^{j'} \ar@{}[dd]|{\cong} \\ A\ar[drr]^{1\otimes {j'}} \ar[dddrr]_1^{~}="1"
&&&& A\ar[dll]_{j\otimes 1}
\ar[dddll]^1_{~}="2" 
\\ && A\otimes A\ar[dd]_p_{~}="3"
\ar@{=>} ;"2"_{\lambda}
\ar@{=>}"1";_{\rho'}
\\ 
\\ && A }
\end{equation*} 

is equal to

\hspace{.4in}
\begin{equation*}  
\xymatrix@!=0.9pc{
&& I \ar[lld]_j \ar[rrd]^{j'} \ar@{}[dd]|{\cong} \\ A\ar[drr]^{1\otimes {j'}} \ar[dddrr]_1^{~}="1"
&&&& A\ar@/_1pc/[dll]_{j\otimes 1}^{~}="4"
\ar@/^3pc/[dddll]^1_{~}="2"
\ar@/^1.6pc/[dll]^{j'\otimes 1}_{~}="5" 
\\ && A\otimes A\ar[dd]_p_{~}="3"
\ar@{=>} ;"2"-/va(-40)+1.8pc/_{\lambda'}
\ar@{=>}"1";_{\rho'}
\ar@{=>}"4";"5"_{\tau\otimes 1}
\\ 
\\ && A } 
\end{equation*} 
which, by properties of Gray monoids, is equal to 
\begin{equation*}  
\xymatrix{
&& I \ar@/_1.2pc/[lld]_j^{~}="4" \ar[rrd]^{j'}
\ar@/^1.2pc/[lld]^{j'}_{~}="5" \ar@{}[dd]|{\cong}
 \\ A\ar[drr]_{1\otimes {j'}} \ar[dddrr]_1^{~}="1"
&&&& A\ar[dll]_{j'\otimes 1}
\ar[dddll]^1_{~}="2" 
\\ && A\otimes A\ar[dd]_p_{~}="3"
\ar@{=>} ;"2"_{\lambda'}
\ar@{=>}"1";_{\rho'}
\ar@{=>}"4";"5"_{\tau}
\\ 
\\ && A } 
\end{equation*} 
which finally, by equation \eqref{muu}, is equal to 
\begin{equation*} 
\xymatrix {
I \ar[rr]^{j'} \ar@/_1.4pc/[dd]_j^{~}="1"
\ar@/^1.4pc/[dd]^{j'}_{~}="2"  && A \ar[dd]^1 \ar@{}[ddll]|{\equiv}
\\ 
\\ A\ar[rr]_1 && A
\ar@{=>}"1";"2"_{\tau}} 
\end{equation*}
\end{proof}  

A skew semimonoidale in a Gray monoid $\cal B$ is a {\em skew monoidale} in $\cal B$ if $\cal U$$(A,p,\alpha)$ is non-empty. We denote such a skew monoidale by $(A,p,j,\alpha,\lambda,\rho)$.

The results proven above now imply, as in the case of the previous section, the following:
\begin{prop}
The units of a skew monoidale $(A,p,j,\alpha,\lambda,\rho)$ are unique up to a unique isomorphism (if they exist).
\end{prop}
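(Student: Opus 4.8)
The plan is to read this off from the two immediately preceding results exactly as Corollary~\ref{iso} was read off from Proposition~\ref{unique} in Section~2. The statement has two halves: that any two units are isomorphic as objects of $\cal U$$(A,p,\alpha)$, and that the isomorphism realising this is itself unique. The second half is immediate: by the preceding Proposition there is exactly one arrow between any two units, so in particular at most one invertible arrow. It therefore remains only to exhibit an isomorphism, and for this I would transport the argument of Corollary~\ref{iso} into the 2-cell setting.

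So suppose $(j,\lambda,\rho)$ and $(j',\lambda',\rho')$ are two units. First I would invoke the preceding Lemma together with the preceding Proposition to obtain the unique arrows $\varphi_{j',j}\colon(j',\lambda',\rho')\to(j,\lambda,\rho)$ and $\varphi_{j,j'}\colon(j,\lambda,\rho)\to(j',\lambda',\rho')$ of $\cal U$$(A,p,\alpha)$; concretely these are the candidate 2-cells $j'\Rightarrow j$ and $j\Rightarrow j'$ pasted out of $\lambda,\rho,\lambda',\rho'$ as in the definition of $\varphi$ above. The key bookkeeping point is that composition of arrows in $\cal U$$(A,p,\alpha)$ is vertical composition of 2-cells, and the identity arrow on a unit $(j,\lambda,\rho)$ is the identity 2-cell $1_j$. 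Granting this, the vertical composite $\varphi_{j,j'}\circ\varphi_{j',j}$ and the identity $1_{j'}$ are both arrows from $(j',\lambda',\rho')$ to itself, hence equal by the uniqueness just proved; symmetrically $\varphi_{j',j}\circ\varphi_{j,j'}=1_j$. Thus $\varphi_{j',j}$ is an invertible 2-cell with inverse $\varphi_{j,j'}$, i.e. an isomorphism in $\cal U$$(A,p,\alpha)$, which together with the first half gives the claim. I would close by noting that this is precisely the assertion that $\cal U$$(A,p,\alpha)$ is equivalent to the terminal category, the internal analogue of the remark following Corollary~\ref{iso}.

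As for the main obstacle: there is essentially none left at this stage, since every genuine Gray-monoid pasting manipulation has already been discharged in the proofs of the preceding Lemma and Proposition. The only care required is the translation of vocabulary — confirming that composition of arrows in $\cal U$$(A,p,\alpha)$ really is vertical 2-cell composition and that identity arrows are identity 2-cells — so that the elementary ``$\varphi\circ\varphi'=1$'' argument of Corollary~\ref{iso} carries over verbatim, now read as an identity of 2-cells rather than of morphisms in $\cal C$.
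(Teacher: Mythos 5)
Your proposal is correct and takes essentially the same route as the paper, which offers no separate proof at all: it simply states that the preceding Lemma and Proposition imply the result "as in the case of the previous section," i.e.\ precisely your transport of the argument of Corollary~\ref{iso} via the two canonical 2-cells $\varphi_{j',j}$ and $\varphi_{j,j'}$ and the uniqueness of arrows in $\cal U$$(A,p,\alpha)$. Your bookkeeping observation that composition of arrows in $\cal U$$(A,p,\alpha)$ is vertical composition of 2-cells, with identity arrows the identity 2-cells (which trivially satisfy \eqref{sComp}), is exactly the translation the paper leaves implicit.
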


\begin{remark}
\textbf{(Theory of Skew Monoidales)}
It should not be a surprise that the elementary nature of the proofs in Section 2 carry over to this setting especially if we were to write the axioms for a skew monoidale not as pasting diagrams in a Gray monoid but as equations between the 1-cells $p$ and $j$. Consider equations \eqref{muu} and \eqref{mmid} rewritten as 
$$\vbox{\xymatrix{
p(1\otimes j)j \ar[rr]^-{\cong}  && p(j\otimes 1)j \ar[d]^-{\lambda j} \\
j \ar[rr]_-{1} \ar[u]^-{\rho j} && j}}
\qquad   \qquad 
\vbox{\xymatrix{
p(p\otimes 1)(1\otimes j \otimes 1) \ar[rr]^-{\alpha (1\otimes j \otimes 1)}  && p(1\otimes p)(1\otimes j \otimes 1) \ar[d]^-{p(1\otimes \lambda)} \\
p \ar[rr]_-{1} \ar[u]^-{p(\rho \otimes 1)} && p}}$$

These equations "look" like the corresponding equations \eqref{unitunit} and \eqref{midunit} from the previous section. In Houston's 2007 thesis \cite{H} there is defined a formal language for a collection of objects, 1-cells, 2-cells and equations between the 2-cells that admits an interpretation, or model, in a monoidal bicategory which he has called a "calculus of components". The calculus of components was used in \cite{H} to show that some results for pseudomonoids(= monoidales) follow formally using the formal language from the corresponding result in the cartesian monoidal 2-category \textbf{Cat}. As noted in \cite{H}, the formal language is not completely general, it applies to a collection of 1-cells of the form 
$\xymatrix {A_1\otimes \ar@{.}[r] & \otimes A_n \ar[r] & B}$ where these can then create, by tensoring and composition, composite 1-cells into a single target object such that the composite 1-cells are of the same form of the original collection. It was also noted in \cite{H} that the calculus of components should be regarded as a higher dimensional analogue of the typed languages for monoidal categories defined by C. Barry Jay in \cite{Jay}. 

Our only remark is that, using the calculus of components for a theory of skew monoidales as compared to a theory of
pseudomonoids(= monoidales) as in \cite{H}, the only real difference is that we need three basic 2-cells as opposed to six and five equations between the 2-cells as opposed to two (not counting the invertibilty equations). With this in mind we then recognise that the results and proofs of this section are formally identical to the proofs of the previous section. That is, the formal proof in the language or theory of skew monoidales is the "same" as the proof of the corresponding result for skew moniodal categories from the previous section. For example, the use of one of the derivation rules for the equations between the 2-cells called the naturality axiom in \cite{H} corresponds to our use of naturality in the proof of Propostion~\ref{unique}.
\end{remark}

\subsubsection*{\textbf{Acknowledgements:}}
I wish to thank my Masters supervisor, Stephen Lack, whose patience and guidance made possible the writing of this article.

\bibliographystyle{amsplain}

\end{document}